\documentclass[a4paper,reqno,11pt]{amsart}
\usepackage[T1]{fontenc}
\usepackage[utf8]{inputenc}
\usepackage{amssymb, amsmath, amsthm, graphicx, enumerate, enumitem, color}
\usepackage[usenames,dvipsnames,svgnames,table]{xcolor}
\usepackage[foot]{amsaddr}
\usepackage[normalem]{ulem}
\usepackage{thmtools}

\usepackage{caption}
\usepackage{subcaption}
\usepackage{wrapfig}
\input{insbox}

\makeatletter
\def\thm@space@setup{
\thm@preskip=4mm
\thm@postskip=0mm
}
\makeatother

\usepackage{tikz}
\usetikzlibrary{backgrounds}

\usepackage{hyperref}
\usepackage[capitalise, compress, noabbrev, nameinlink]{cleveref}
\definecolor{linkblue}{named}{MidnightBlue}
\hypersetup{colorlinks=true, linkcolor=linkblue,  anchorcolor=linkblue,
	citecolor=linkblue, filecolor=linkblue, menucolor=linkblue,
	urlcolor=linkblue} 
\usepackage{mathtools}
\usepackage{thmtools, thm-restate}
\usepackage[longnamesfirst,numbers,sort&compress]{natbib}
\usepackage{etoolbox}

\theoremstyle{plain}
\newtheorem{thm}{Theorem}
\newtheorem*{thm*}{Theorem}
\newtheorem{theorem}[thm]{Theorem}
\newtheorem{lem}[thm]{Lemma}
\newtheorem{lemma}[thm]{Lemma}
\newtheorem*{lemma*}{Lemma}
\newtheorem{cor}[thm]{Corollary}
\newtheorem*{cor*}{Corollary}

\newtheorem{claim}{Claim}[thm]

\newtheorem*{lem*}{Lemma}

\newtheorem*{conjecture*}{Conjecture}

\newenvironment{proofclaim}[1][]
    {\let\oldqed\qedsymbol\renewcommand{\qedsymbol}{\ensuremath{\lozenge}}\begin{proof}[Proof of claim] }{\end{proof}\renewcommand{\qedsymbol}{\oldqed}}

\newcommand{\R}{\mathbb{R}}
\let\le\leqslant

\let\leq\leqslant
\let\geq\geqslant

\let\subset\subseteq

\let\epsilon\varepsilon

\DeclareMathOperator\interior{int}
\DeclareMathOperator\Interior{\overline{int}}

\definecolor{brightmaroon}{rgb}{0.76, 0.13, 0.28}
\newcommand{\defin}[1]{\emph{\textcolor{brightmaroon}{#1}}}

\DeclarePairedDelimiter\floor{\lfloor}{\rfloor}

\newcommand\card[1]{\left|#1\right|}

\newcommand{\cgrid}[2]{$#1\times #2$ cylindrical grid}

\usepackage{comment}

\newcommand{\say}[1]{``#1''} 

\setenumerate{label=\textup{(\roman*)}, leftmargin=*, widest=iii,}
\setitemize{leftmargin=*, widest=iii, itemsep=.5em}

\title{An improved bound on the treewidth of planar graphs excluding a grid minor}

\begin{document}

\author[Cames van Batenburg]{Wouter Cames van Batenburg}
\address[W.~Cames van Batenburg]{D\'epartement d'Informatique, Universit\'e libre de Bruxelles, Belgium}
\email{w.p.s.camesvanbatenburg@gmail.com}

\author[Claus]{Quentin Claus}
\address[Q.~Claus]{D\'epartement de Mathématiques, Universit\'e libre de Bruxelles, Belgium}
\email{quentin.claus@ulb.be}

\author[Joret]{Gwena\"el Joret}
\address[G.~Joret]{D\'epartement d'Informatique, Universit\'e libre de Bruxelles, Belgium}
\email{gwenael.joret@ulb.be}

\author[Petit]{Robin Petit}
\address[R.~Petit]{D\'epartement d'Informatique, Universit\'e libre de Bruxelles, Belgium}
\email{robin.petit@ulb.be}

\author[Raymond]{Jean-Florent Raymond}
\address[J-F.~Raymond]{CNRS, ENS de Lyon, Université Claude Bernard Lyon 1, LIP UMR 5668, Lyon, France}
\email{jean-florent.raymond@cnrs.fr}

\author[Robinson]{Eileen Robinson}
\address[E.~Robinson]{D\'epartement de Mathématiques, Universit\'e libre de Bruxelles, Belgium}
\email{eileen.robinson@ulb.be}

\thanks{W.~Cames van Batenburg, Q.\ Claus and G.\ Joret are supported by the Belgian National Fund for Scientific Research (FNRS)}

\begin{abstract} 
We show that every planar graph with no $t \times t$ grid minor has treewidth at most $4t +4$. 
This improves on the previously best known bound of $\frac{9}{2}t - \frac{11}{2}$, due to Gu and Tamaki (2012), 
and is within a factor $2$ of optimal.  

A key step in the proof is showing the following result, which might be of independent interest: 
Every $2$-connected plane graph $G$ with radius $d$ and faces of size at most $k$ 
has a tree-decomposition of width at most $\max\{3d+ k+5, 2d+2k+1\}$ such that the vertex set of every face of $G$ is contained in some bag. 
\end{abstract}

\maketitle

\section{Introduction}

For every positive integer $t$, let $g(t)$ be the smallest integer such that every planar graph with no $t \times t$ grid minor has treewidth at most $g(t)$. It is well known that $g(t)$ is linear in $t$. The first linear upper bound on $g(t)$ was given by \citet{RST94} in 1994, who proved that $g(t) \leq 6t-5$.
This was improved to $g(t)\leq 5t-6$  by \citet{Grigoriev11}, and then to $g(t) \leq \frac{9}{2}t - \frac{11}{2}$ by \citet{gu2012improved} in 2012, which is the current best upper bound. 

In this paper, we give an improved upper bound of $g(t) \leq 4t +4$. 

\begin{theorem} \label{thm:main}
Every planar graph with no $t \times t$ grid minor has treewidth at most $4t+4$.
\end{theorem}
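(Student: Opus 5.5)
We plan to derive the theorem from the face-respecting tree-decomposition result stated in the abstract: every $2$-connected plane graph of radius $d$ with faces of size at most $k$ has a tree-decomposition of width at most $\max\{3d+k+5,\,2d+2k+1\}$ in which every face lies in some bag. The overall strategy follows the classical Robertson--Seymour--Thomas approach. A planar graph of large treewidth contains a large grid unless some "radial" distance parameter is small, and small radius gives a small tree-decomposition. Our aim is to arrange the parameters so that $d \approx t$ and $k$ is bounded by a constant. Then $3d+k+5 \le 4t+4$ should follow from the precise choices.

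Reductions come first. Treewidth is the maximum over blocks, and an excluded grid minor is preserved when passing to subgraphs. So we may assume $G$ is $2$-connected, and also edge-maximal among graphs with no $t\times t$ grid minor within the planar setting. We fix a plane embedding and consider the radial graph, or equivalently the vertex–face incidence graph. The key structural input is a statement in the style of Gu–Tamaki or Robertson–Seymour–Thomas: if $G$ has no $t\times t$ grid minor, then for suitable face(s) the vertices are all within radial distance about $t$ of them. The argument runs as follows. Suppose some face is far, at distance more than $t$, from a chosen outer face. Then by Menger-type arguments in the plane one finds $t$ disjoint nested cycles separating the two faces, together with $t$ disjoint paths crossing them. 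Together these form a cylindrical grid, which contains the $t\times t$ grid as a minor. The next step is to use this to build an auxiliary $2$-connected plane graph $H$ containing $G$ as a minor, or as a graph whose treewidth controls that of $G$. We construct $H$ by triangulating faces (or adding a vertex inside each face adjacent to its boundary), so that faces have bounded size $k$, say $k=3$. Graph distance in $H$ must reflect radial distance in $G$, giving $\operatorname{rad}(H)\le t-1$ or so.

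Plugging $d\le t-1$ and $k=3$ into the bound gives width $\max\{3t+5,\,2t+5\}$. That is even better than needed, which suggests the honest reduction loses more. Most likely one can only guarantee radius about $t$ in the radial graph, which is bipartite. Distances there are doubled, so the reduction to $H$ must carefully use the face-containment property. One first decomposes a graph whose vertices include face-vertices. Then each face-vertex in a bag is replaced by the boundary vertices of that face, or vice versa. This contraction/uncontraction step is where the bound $4t+4$ would emerge: for example $d=t$ and faces of size up to $t-1$, giving $3t+(t-1)+5=4t+4$.

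The main obstacle is the structural step. It must show precisely that excluding the $t\times t$ grid forces a center from which everything lies within distance $t$, while the relevant face sizes stay bounded by $t-1$ (large faces being handled because a face of length at least $t$ with deep interior would itself yield the cylindrical grid). We would first try the nested-cycles argument: take BFS layers around a central face and show that $t$ layers plus $t$ radial disjoint paths, obtained by Menger, give a $t\times t$ cylindrical grid. The delicate part is getting the off-by-one constants right.
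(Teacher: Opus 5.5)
\textbf{The structural claim your plan relies on is false.} Excluding a $t\times t$ grid minor does not force ``a center from which everything lies within distance $t$''. The chain of nested triangles $C_3\times P_n$ is $3$-connected and planar. It has treewidth at most $5$ (take bags formed by two consecutive triangles), so it has no $t\times t$ grid minor for $t\geq 6$. Yet in its unique embedding every face meets at most two consecutive triangles, so every face is at radial distance $\Omega(n)$ from some vertex.

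The error is in your Menger step. The nested cycles do come for free from depth, but Menger only gives a dichotomy: \emph{either} $t$ disjoint crossing paths (hence a cylindrical grid), \emph{or} a noose with at most $t-1$ vertices separating the deep part from the outer face. You discard the second alternative, and it is exactly what happens here: each triangle yields such a noose with three vertices. For the same reason, the faces of size at most $t-1$ that make your arithmetic $3t+(t-1)+5=4t+4$ work cannot be faces of $G$. Subdividing an edge of the outermost triangle many times gives an arbitrarily long face with deep interior and still no grid. So no single application of the radius-based decomposition can prove the theorem.

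\textbf{What is missing is the recursion the paper uses, following Gu--Tamaki.} The paper proves a stronger statement by induction on $|V(G)|$: every $2$-connected plane graph with no $k\times h$ cylindrical grid minor has a tree-decomposition of width at most $\max\{3h+k+4,2h+2k-1\}$, and one with the outer face in a bag whenever that face has at most $k-1$ vertices. The induction step runs as follows.
\begin{enumerate}
\item Each contour at depth $h$ is cut off from the depth-$1$ contour by a noose with at most $k-1$ vertices; this is the second Menger alternative, via Lemma 4.2 of Gu--Tamaki. These nooses are chosen optimal and uncrossed, so their disks have pairwise disjoint interiors.
\item Each such disk is replaced by a face $F_N$ bounded by a cycle $C_N$ on $V_G(N)$. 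Edges are then added inside faces along a BFS tree of $\mathcal{R}_G$. This turns radial depth $2h+1$ into graph distance $h$ from the outer face while keeping every $F_N$ a face. This is how the factor-two issue you raise is handled, not by contracting face-vertices.
\item The face-respecting decomposition, with face bound $k-1$ and $d=h$, then handles this shallow part with width $\max\{3h+k+4,2h+2k-1\}$ and each $V_G(N)$ in a bag.
\item Each enclosed part $G_N$ is $2$-connected and has outer face $C_N$ of size at most $k-1$. It is a minor of $G$, using $|V_G(N)|$ disjoint paths from $C_N$ to the depth-$1$ contour, which exist by optimality of $N$. So induction applies to it with $V_G(N)$ in a bag.
\item The decompositions are glued along these bags.
\end{enumerate}
This gluing is the real purpose of the face-containment property. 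With $k=h=t$, and since the $t\times t$ grid is a subgraph of the $t\times t$ cylindrical grid, the bound becomes $\max\{4t+4,4t-1\}=4t+4$.
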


As for lower bounds, it is known that
$g(t)\geq 2t-3$, as proved by \citet{gu2012improved} via the Cartesian product of a cycle and a path; see also~\cite{aidun2020treewidth}.\footnote{In~\cite{GrigorievEtAl12}, it is mentioned that a construction due to Mazoit and Todinca might be able to show that $g(t) \geq 3t$. However, it turns out that the construction does not give a lower bound better than $g(t)\geq 2t-3$; we thank Ioan Todinca for helpful discussions on this topic.}  Thus our upper bound is (roughly) within a factor $2$ of optimal. 

Our proof of \cref{thm:main} follows closely that of \citet{gu2012improved} for the bound $g(t)\leq \frac{9}{2}t - \frac{11}{2}$ but with some key changes in how the tree-decomposition is being built. In the rest of this introduction, we first review the approach of \citet{gu2012improved}  and then we explain  how we modified it to prove  \cref{thm:main}. 

Given positive integers $k, h$ with $k\geq 3$ and $h\geq 1$, an \defin{\cgrid{k}{h}} 
is the graph obtained by taking the Cartesian product of a cycle on $k$ vertices with a path on $h$ vertices. The main result of  \citet{gu2012improved}  is the following upper bound on the branchwidth of planar graphs having no \cgrid{k}{h} minor. 

\begin{theorem}[\citet{gu2012improved}] \label{thm:GT-cylindrical-grids}
Every planar graph with no \cgrid{k}{h} minor has branchwidth at most $k+2h-3$. 
\end{theorem}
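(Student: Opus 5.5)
This is the Gu--Tamaki theorem, so I would reconstruct their argument through the duality between branchwidth and tangles (Robertson--Seymour, made sharper for planar graphs by Seymour--Thomas).

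\textbf{Setup.} Let $G$ be planar, and suppose $\mathrm{bw}(G) > k+2h-3$. We may assume $G$ is $2$-connected, since branchwidth is attained on a block. Embed $G$ in the sphere. By the Seymour--Thomas characterization, branchwidth greater than $\beta$ in a plane graph is equivalent to the existence of a tangle of order $\beta+1$. That tangle can be described as a \emph{slope} on the radial (medial) graph: no noose of length less than $2(\beta+1)$, counted in vertex-face alternations, splits off a ``big'' side. So with $\beta = k+2h-3$, every closed curve meeting $G$ only in vertices and passing through fewer than $k+2h-2$ vertices bounds a small side. The goal is to use this to build a $k \times h$ cylindrical grid minor.

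\textbf{Construction.}
\begin{enumerate}
\item Pick a face (or vertex) $f$ lying on the big side of the tangle. Consider the radial-distance layers of the medial/radial graph from $f$.
\item For $i=1,\dots,h$, let $C_i$ be the boundary noose of the ball of radial radius $i$ around $f$, pushed toward the big side. By a shortest-cycle argument, each of these contains a cycle of $G$ of length at least about $k$. The point is that $C_i$ cannot be short: a short noose separating $f$ from the tangle's big side would contradict the tangle order, since the side containing $f$ has radius at most $h$.
\item Obtain $h$ disjoint concentric cycles $D_1,\dots,D_h$ nested around $f$.
\item Find $k$ disjoint ``radial'' paths crossing all of $D_1,\dots,D_h$, by Menger's theorem applied between the innermost and outermost cycles. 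If there were fewer than $k$ such paths, a separator of size less than $k$ between $D_1$ and $D_h$ would exist. Combined with the radial paths of length at most $h$ on each side, it would give a noose of length less than $k+2h-2$ separating a big side. This is exactly where the parameter $k+2h-3$ comes from: the separator contributes $k$, and two radial ``spokes'' of length about $h$ reach the center face.
\item With $h$ nested disjoint cycles and $k$ disjoint paths crossing all of them in a planar embedding, contract to get a $k \times h$ cylindrical grid minor. The paths must be rerouted so that each meets each cycle in a single segment, which is a standard planar uncrossing argument.
\end{enumerate}

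\textbf{Main obstacle.} The hard step is the precise noose-length accounting in steps 2 and 4, which is what yields the exact constant $k+2h-3$ rather than something weaker. One must show that the Menger cut together with a shortest path back to the center face forms a noose that is both short enough and separates the tangle's big side from $f$. This needs care with the half-integral vertex/face lengths in the radial graph, and with the fact that cut vertices may lie on the cycles themselves. My first attempt would be to work entirely in the radial graph with the carving/slope formalism. There, distances are integral and nooses correspond to cycles of the radial graph, and a parity argument should give the $-3$.
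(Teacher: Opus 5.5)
\textbf{There is a genuine gap.} Your tangle-based (dual) outline never produces an actual contradiction with the tangle, and the step where it claims one is exactly where the content of the theorem lies.

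Step~1 cannot be carried out: in general there is no face ``on the big side'' of the tangle. Let $\theta=k+2h-2$ and let $f$ be a face with $|V(f)|<\theta$. Then $(G[f],\,G-E(G[f]))$ is a separation of order $|V(f)|<\theta$. Its second side contains all of $V(G)$, so by the tangle axiom (a small side never contains every vertex) that side is big. Hence $f$ lies on the small side of its own boundary noose, and the same holds for a vertex of small degree. So short nooses with $f$ on their small side are entirely normal, and the justification in your step~2 is not a contradiction.

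The same problem undermines step~4. Every noose of length less than $\theta$ has a big side, so exhibiting ``a noose of length less than $k+2h-2$ separating a big side'' contradicts nothing. The case you must handle is when the tangle lives beyond the small Menger separator. For example, glue a long ladder along an edge to a huge triangulated grid and put $f$ at the far end of the ladder. The branchwidth is large, yet at most $2$ disjoint paths cross the layers around $f$, and nothing in your argument relocates the centre. In addition, the layers around one centre need not be single cycles, since there may be many deep regions, so a single nested family $D_1,\dots,D_h$ is not available in general.

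\textbf{The missing idea} is that small separators are cut points, not contradictions. This is how the paper, following Gu and Tamaki, proceeds:
\begin{itemize}
\item Around every contour at depth $h$ from the outer face, either $k$ disjoint paths to the depth-$1$ contour give the cylinder (\cref{lem:cylinder_minor}), or there is an optimal noose with at most $k-1$ vertices.
\item These nooses are made pairwise non-crossing (\cref{lem:Separating_nooses}).
\item The deep parts $G_N$, whose outer face now has at most $k-1$ vertices, are handled by induction.
\item The shallow part has radius at most $h$ and special faces of size at most $k-1$.
\end{itemize}
The bound $k+2h-3$ comes entirely from the shallow-part lemma: Step~1 in the introduction, i.e.\ the branchwidth analogue of \cref{thm:td_faces}. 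It bounds the width of such a graph while accommodating every special face.

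A tangle proof would have to reproduce both halves. If the tangle lies inside some $N$, you must show it induces a tangle of the same order on $G_N$; this is not automatic, since $G_N$ is only a minor of $G$ carrying the added cycle $C_N$. Otherwise you must prove that no tangle of order $k+2h-2$ can live in a radius-$h$ plane graph in which all special faces are small. That last statement is precisely the ``separator plus two spokes of length $h$'' count you allude to. Your proposal defers it to ``a parity argument should give the $-3$'' rather than proving it, so the core of the theorem is not established.
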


The treewidth and branchwidth of planar graphs are closely related to each other:  
\citet{RS91} showed that if a planar graph has treewidth $w$ and branchwidth $b \geq 2$ then 
\begin{equation}
    \label{eq:tw_vs_bw}
    b-1 \leq w \leq \frac{3}{2}b-1. 
\end{equation}
The aforementioned bound of $g(t) \leq \frac{9}{2}t - \frac{11}{2}$ follows then from the second inequality above combined with \cref{thm:GT-cylindrical-grids} and the fact that the $t \times t$ grid is a subgraph of the \cgrid{t}{t}.  

It is known that both the lower and upper bounds in \eqref{eq:tw_vs_bw} are tight, that is, there exist planar graphs with branchwidth $b\geq 2$ that have treewidth $b-1$, and there also exist some  
that have treewidth $\frac{3}{2}b-1$.
One might hope that by going over the proof of \cref{thm:GT-cylindrical-grids} given in \cite{gu2012improved} and translating the proof from the language of branch-decompositions to that of tree-decompositions, one could prove directly that 
$g(t) \leq 4t + O(1)$ by tweaking the resulting tree-decompositions a bit. 
However, we were unable to do so, which prompted us to develop another approach. 

We proceed with a very informal outline of the proof of \citet{gu2012improved} and then explain the changes that we made to it. 
The proof is split into two main steps: Step 1 is about bounding the branchwidth of planar graphs with small radius. Step 2 is a global inductive proof, separating the given planar graph $G$ in a shallow part and multiple deep parts, and using Step 1 as a black box to bound the branchwidth of the shallow part, and the induction hypothesis to bound the branchwidth of each of the deep parts, and then combining all the resulting branch-decompositions together (at no extra cost) to get the desired bound on the branchwidth of $G$. 
The resulting bound on the branchwidth is governed by the bound from Step 1. 
In this sense, Step 1 is the real \say{engine} of the proof; 
in particular, establishing better bounds for Step 1 would result in better bounds overall.  

Our proof of \cref{thm:main} follows the same general approach, but formulated in terms of tree-decompositions. 
Our Step 2, which is described in \cref{sec:GridMinor}, is conceptually the same as that of \citet{gu2012improved}, up to some minor modifications due to technicalities coming from our setup. 
Our version of Step 1 (see \cref{sec:tree_decompositions}) is where the proof differs, and where new ideas are introduced, so 
let us focus on Step 1. 
It is well-known that planar graphs with small radius have small treewidth, specifically \citet{GM3} (see also \citet{E00}) proved that every planar graph with radius $h$ has treewidth at most $3h + 1$. 
In Step 1, we prove a variant of this result:  
Informally, we are given a plane graph $G$ with radius at most $h$, and a collection of {\em special faces} of $G$, each of size at most $k$. 
The goal is to build a tree-decomposition of $G$ of small width with the extra property that, for each special face, there is a bag of the tree-decomposition containing all the vertices on the boundary of the face. 
By translating Step 1 from the proof in \cite{gu2012improved} (see Section 3 in that paper) to tree-decompositions, 
one can obtain such a tree-decomposition of $G$ that has width at most $3h + \frac{3}{2}k + O(1)$. 
In our version of Step 1, we achieve a width of $3h + k + O(1)$, which allows us eventually to shave off  a $\frac{1}{2}t$ term (roughly) in the resulting upper bound on $g(t)$ when combined with Step 2.  
This is done by building the tree-decomposition differently; in particular, we use a careful necklace splitting argument at a key step of the proof to avoid an extra $\frac{1}{2}k$ cost in the width. 

We conclude this introduction with two remarks. 
First, similarly as for previous upper bounds on $g(t)$ given in the literature, our proof of \cref{thm:main} is algorithmic, it can be turned into a polynomial-time algorithm finding either a tree-decomposition of width at most $4t+4$, or a $t\times t$ grid minor in the plane graph given in input. 
Second, we tried to simplify the exposition of the proof as much as possible and as a result, we made no efforts to optimize the constant term in the bound $g(t) \leq 4t+4$. 
(For instance, $g(t) \leq 4t+1$ can be shown at the price of some extra technicalities when dealing with the nooses in \cref{sec:GridMinor}.) 
On the other hand, the $4t$ term seems quite difficult to improve. 
In particular, we believe that proving a $(4-\epsilon)t + O(1)$ bound using this framework would require proving a $(3-\epsilon)h + O(1)$ bound on the treewidth of planar graphs with radius $h$, which, as far as we are aware, is an open problem since the 1984 work of \citet{GM3}.

The paper is organized as follows. 
The necessary definitions and notations are given in \cref{sec:prelim}. 
The aforementioned result for Step 1, which is the heart of the proof, is proved in \cref{sec:tree_decompositions}. 
Then Step 2, the global inductive proof, is described in \cref{sec:GridMinor}.

\section{Preliminaries}\label{sec:prelim}

In this paper, \say{graph} always means an undirected, finite, simple graph. 
We also consider multigraphs, where parallel edges and loops are allowed. 
We let $V(G)$ and $E(G)$ denote the vertex and edge sets of a (multi)graph $G$, respectively.

A graph $G$ is \defin{$2$-connected} if $G$ is connected, has at least three vertices, and for every $v\in V(G)$, the graph $G-v$ is connected.  
A \defin{block} of a graph $G$ is an induced subgraph of $G$ that is either $2$-connected or isomorphic to $K_1$ or $K_2$, and is inclusion-wise maximal with this property. A block is said to be \defin{non-trivial} if it is $2$-connected, and \defin{trivial} otherwise. 

The \defin{distance} between two vertices $v, w\in V(G)$ of a multigraph $G$, denoted by $d_{G}(v, w)$, is the number of edges in a shortest path in $G$ between $v$ and $w$, if it exists, and $+\infty$ if $v$ and $w$ are not in the same connected component of $G$.

Given two sets $A,B \subseteq V(G)$, a path $P$ in a multigraph $G$ is called an \defin{$A$--$B$ path} if its vertices can be enumerated as $v_1,\ldots,v_k$ along $P$ with $V(P)\cap A = \{v_1\}$ and $V(P)\cap B = \{v_k\}$.  
A subset $X\subseteq V(G)$ \defin{separates} $A$ and $B$ in $G$ if there is no $A$--$B$ path in $G-X$. 

Let $H$ be a graph.  An \defin{$H$-model} in a graph $G$ is a set $\{A_v \ | \ v\in V(H)\}$ of pairwise disjoint nonempty subsets of $V(G)$, each inducing a connected subgraph of $G$, and such that for every edge $uv\in V(H)$, there exists $x\in A_u$ and $y\in A_v$ such that $xy\in E(G)$.
If $G$ contains a $H$-model, we say that $H$ is a \defin{minor} of $G$. 

A \defin{tree-decomposition} of a multigraph $G$ is a pair $(T, \mathcal B)$, where $T$ is a tree, and $\mathcal B:=\{\beta_x\subseteq V(G) \ | \ x\in V(T)\}$ is a collection, such that the subgraph of $T$ induced by $\{x\in V(T) \ | \ v\in \beta_x\}$ is connected for every $v\in V(G)$, and for every edge $uv\in E(G)$ there exists $x\in V(T)$ such that $\beta_x$ contains $u$ and $v$.  The elements of $\mathcal{B}$ are called \defin{bags}. The \defin{width} of $\mathcal B$ is given by $\max_{\beta \in \mathcal B}|\beta| -1$ and the \defin{treewidth} of $G$, denoted by $tw(G)$, is the minimum width of a tree-decomposition of~$G$.

A \defin{rooted tree} is a tree with a specified vertex $r\in V(T)$ called the \defin{root}; 
its \defin{height} is the length of a longest path that has $r$ as an endpoint in $T$. 
For a rooted tree $T$ with root $r$, and a vertex $v\in V(T)$, we call a vertex $w\in V(T)$ a \defin{parent} of $v$ in $T$ if $vw\in E(T)$ and $d_T(r, w)+1=d_T(r, v)$.  It is easy to see that every vertex apart from $r$ has exactly one parent.  We call a vertex $w\in V(T)$ a \defin{child} of $v$ in $T$ if $v$ is the parent of $w$. 
For $x,y\in V(T)$, we define $xTy$ to be the unique path from $x$ to $y$ in $T$.

A \defin{BFS tree} of a connected multigraph $G$ is a rooted spanning tree $T$ of $G$, with root $r\in V(T)$, such that $d_{T}(r, v)=d_G(r, v)$ for every $v\in V(G)$. 

We review some standard notions and terminology about planar graphs, with the corresponding notations used in the paper, sticking to an informal treatment when it comes to the underlying topological aspects. 
We refer the reader to the textbook by~\citet{diestel:graph} for background on this topic. 
A multigraph $G$ is \defin{planar} if there exists a cross-free embedding of $G$ in the plane. 
A \defin{plane multigraph} $G$ is a planar graph together with a specific embedding in the plane where edges do not cross. 
It will be convenient to identify $V(G)$ with the corresponding set of points in the plane, $E(G)$ with the corresponding set of curves in the planes, and see $G$ itself as a subset of the plane.   
By extension, for every subgraph $H$ of $G$, we can also see $H$ as the corresponding subset of the plane. 
We see the \defin{faces} of $G$ as the connected regions of the plane with the drawing of $G$ removed, and we let $F(G)$ denote the set of faces of $G$. 
Given a face $F$ of $G$, we let $G[F]$ denote the subgraph of $G$ corresponding to the boundary of $F$. 
When no confusion can occur, we sometimes treat a face and its corresponding subgraph interchangeably, for simplicity; 
e.g.\ when we say that a face $F$ contains an edge $e$ we mean that $G[F]$ contains~$e$.

The \defin{dual graph} of a plane graph $G$ is the multigraph $G^\star$ such that $V(G^\star)$ corresponds to the set of faces of $G$, and each edge $e$ of $G$ gives rise to an edge of $G^{\star}$ (called \defin{dual edge} of $e$) joining the vertices associated with its incident faces. 

Note that if  $G$ is a triangulation with at least $4$ vertices then $G^{\star}$ is a simple  graph. 
The \defin{radial graph} of $G$, denoted by $\mathcal{R}_G$, is the graph such that $V(\mathcal{R}_G)=V(G)\cup F(G)$, and two vertices are adjacent in $\mathcal{R}_G$ if and only if they correspond in $G$ to a vertex and a face containing this vertex.  
It will be convenient to have a plane embedding of $\mathcal{R}_G$, and thus we assume that $\mathcal{R}_G$ is drawn in the plane, with the vertices in $V(G)$ at their respective positions in $G$, the vertices in $F(G)$ inside their respective face, and edges drawn in the natural way.

A \defin{noose} of a plane graph $G$ is a cycle in $\mathcal{R}_G$. 
We remark that, in the literature, a noose is traditionally defined as a closed curve in the plane intersecting 
$G$ only in vertices and crossing each face of $G$ at most once.  
When $G$ is $2$-connected, the two definitions are essentially equivalent; however, our definition ensures that we consider only finitely 
many nooses for $G$, which simplifies some of the arguments later on. 
(We remark that we will only consider radial graphs of $2$-connected graphs in the paper.) 
Given a noose $N$, we will often need to consider the set of vertices of $G$ that appear on $N$; we denote this set by $V_G(N)$. 
Two vertices of $V_G(N)$ are said to be \defin{consecutive} on the noose $N$ if they are at distance $2$ in the cycle $N$ of $\mathcal{R}_G$ (and thus appear on a common face). 

Let $\gamma$ be a closed curve in the plane.  
We let $\interior(\gamma)$ and $\Interior(\gamma)$ denote the open disk and the closed disk, respectively, bounded by $\gamma$. 
These notations will often be used for $\gamma$ being the curve defined by a noose $N$ of a plane graph $G$.

A closed curve $\gamma$ in the plane \defin{separates} two subgraphs $G_1, G_2$ of a plane graph $G$ if, up to exchanging $G_1, G_2$ if necessary, 
$G_1$ is contained in $\Interior(\gamma)$ and $G_2$ is contained in $\R^2 \setminus \interior(\gamma)$. 
Similarly, given $Y_1, Y_2 \subseteq V(G)$, we say that $\gamma$ separates $Y_1$ from $Y_2$ if, up to exchanging $Y_1, Y_2$, $Y_1$ is contained in $\Interior(\gamma)$ and $Y_2$ is contained in $\R^2 \setminus \interior(\gamma)$. 

For a plane graph $G$ and a spanning tree $T$ of $G$, the \defin{co-tree} of $T$ is the subgraph $T^\star$ of the dual $G^\star$ such that the set of vertices of $T^\star$ is the set of faces of $G$, and the set of edges of $T^\star$ is the set of edges of $G^\star$ that are not dual to any edge of $T$.  It is well-known that this graph is a spanning tree of the dual of $G$ (hence the name).

\section{Tree-decompositions of plane graphs accommodating their faces}
\label{sec:tree_decompositions}

The following theorem is the main result of this section.  

\begin{theorem}
\label{thm:td_faces}
    Let $G$ be a $2$-connected loopless plane multigraph, let $R$ be a face of $G$, and let $d$ be the maximum distance in $G$ between $V(G[R])$ and a vertex of $G$.
    Let $k$ be the maximum number of vertices in a face of $G$. 
    Then $G$ admits a tree-decomposition of width at most $\max\{3d + k + 5, 2d + 2k +1\}$ such that the vertex set of every face of $G$ is contained in some bag.
\end{theorem}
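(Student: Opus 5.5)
We adapt the classical Eppstein/Robertson--Seymour argument for planar graphs of radius $d$, which runs a BFS tree against its co-tree, so that each face comes out inside a single bag.

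\textbf{Step 1: reduction to a triangulation with a BFS tree.} Contract the face $R$ to a single root: add a new vertex $r$ inside $R$, adjacent to every vertex of $V(G[R])$. Then every vertex lies within distance $d+1$ of $r$. Next, triangulate each face $F\neq R$ by adding a new vertex $x_F$ inside $F$, adjacent to all vertices of $G[F]$. Since $G$ is $2$-connected, each face boundary is a cycle, so this is well defined. Call the result $G'$ and fix a BFS tree $T$ of $G'$ rooted at $r$.

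We choose $T$ so that every $x_F$ is a leaf whose parent is a vertex of $G[F]$ closest to $r$. Then every root-to-vertex path in $T$ has length at most $d+1$, and it passes only through $r$ and original vertices, apart from a possible final leaf $x_F$. The co-tree $T^\star$ is a spanning tree of the dual of the near-triangulation $G'$, with maximum degree $3$. For a dual vertex (a triangle $uvw$), the standard bag is the union of the three root paths $rTu$, $rTv$, $rTw$. After deleting $r$ and the auxiliary vertices, this bag has size at most $3d+3$.

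\textbf{Step 2: making each face fit in one bag.} The triangles inside a face $F$ are exactly the triangles containing $x_F$. They form a fan around $x_F$ and induce a path in the dual, of which $T^\star$ keeps all edges but one, since $x_F$ is a leaf of $T$. Adding all of $V(G[F])$ to every bag in this fan would cost $+k$ on top of $3d+3$, giving width roughly $3d+k+O(1)$. However, one must also keep the subtree property for the other bags that pass through the fan. The naive fix adds $V(G[F])$ along whole subpaths of $T^\star$ near the fan, and when two faces interact the cost grows by another $k/2$ or $k$. This is presumably the $\tfrac32k$ loss that the paper's necklace-splitting argument avoids.

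\textbf{Step 3: the main obstacle, handled by necklace splitting.} Instead of using the full root paths of the $k$ vertices of $F$, we would build a dedicated bag for $F$. It contains $V(G[F])$ together with the root paths of only two or three chosen vertices of $F$, giving size about $2d+k$ or $3d+k$. The subtrees of $T^\star$ hanging off the fan are then attached to this bag. Each such subtree is separated from the rest by a noose through two consecutive root paths and vertices of $F$.

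The difficulty is to split the cyclic sequence of $F$'s boundary (the "necklace") so that every attached piece sees at most about $k$ boundary vertices in total. Two configurations must be handled:
\begin{itemize}
\item when a piece is bounded by three full root paths plus part of $F$, the bound is $3d+k+5$;
\item when it is bounded by two root paths plus parts of two faces, the bound is $2d+2k+1$.
\end{itemize}
We would prove this by induction on the pieces, cut along nooses of the radial graph, with the hypothesis that the boundary of each piece consists of at most three geodesic root paths plus at most one or two face arcs of total length at most $k$. The hardest part is the combinatorial splitting lemma: given a cycle of $k$ vertices with a tree structure attached, we must cut it into two arcs so that the recursion never accumulates more than $k$ face vertices in a single bag. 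The constants $+5$ and $+1$ arise from the root, the auxiliary vertices, and the off-by-one in the distances.
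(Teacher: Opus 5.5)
\textbf{There is a genuine gap: the proposal puts the difficulty in the wrong place, and as a result it never handles the one face that causes it, namely $R$.}

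For a face $F\neq R$, the naive fix you dismiss in Step 2 already works. The triangles around $x_F$ induce a path in $T^\star$, and each $v\in V(G[F])$ already lies in the bag of a fan triangle containing it. So adding all of $V(G[F])$ to every fan bag keeps the set of bags containing $v$ connected, at a cost of at most $k$ on top of $3d+O(1)$. Different ordinary faces never interact, since each triangle lies in exactly one face of $G$.

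The real obstacle is $R$. Since $r$ is the root of $T$, every edge $rv$ with $v\in V(G[R])$ lies in $T$. Hence each of the $\ell=|V(G[R])|$ triangles around $r$ is a \emph{leaf} of $T^\star$, and these leaves are scattered over the whole co-tree. Nothing in your Steps 1--3 puts $V(G[R])$ into a single bag. Carrying the vertices of $R$ from these leaves to a common node can, without care, add up to $k$ further vertices to bags that already hold about $3d+k$. This accumulation of $R$-vertices on top of face vertices is what produces the $\frac32 k$ loss (or worse) you allude to.

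\textbf{The missing idea, where the necklace splitting actually lives, is the following.}
\begin{enumerate}
\item \emph{Choice of $S$.} In the co-tree of the BFS tree of $G+r$ (before inserting the $x_F$), color the $\ell$ leaves coming from the triangles around $r$ red. Orient each edge towards the side with more red leaves, and let $S$ be the face of a sink. Then $S$ is not incident to $r$, and every component of the co-tree minus $S$ has at most $\lfloor \ell/2\rfloor$ red leaves.
\item \emph{Bags for ordinary faces.} For each face $F\neq R$, pick $p(F),o(F)$ on $G[F]$, roughly opposite when $F\neq S$. Hang $x_F$ from $p(F)$, which need not be a vertex closest to $r$. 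Give each fan triangle of $F$ only the $p(F)$--$o(F)$ arc of $G[F]$ on its side, which has at most $\lceil k/2\rceil+1$ vertices when $F\neq S$. Subdivide the co-tree edge dual to $o(F)x_F$ by a node whose bag is $V(G[F])$ plus only the root paths of $x_F$ and $o(F)$.
\item \emph{Routing $R$.} Root everything at the node created for $S$, and add each $v\in V(G[R])$ to all bags on the paths from its two red leaves to this root.
\item \emph{The two bounds.} The root bag holds $V(G[R])\cup V(G[S])$ and two root paths, which gives the $2d+2k+1$ term. Every bag not belonging to $S$ lies in a component hanging off $S$, so it receives at most $\lfloor k/2\rfloor+1$ vertices of $R$ (the red leaves below it are consecutive around $r$). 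This gives $3d+k+O(1)$.
\item \emph{Where the splitting is needed.} In the fan of $S$, a bag can receive many vertices of $R$. Weight each edge of $G[S]$ by the number of red leaves behind it; these weights sum to $\ell$. A necklace argument then yields $p(S),o(S)$ such that each of the two arcs has total weight (vertices plus red leaves) at most $k+2$. Thus every fan bag of $S$ receives at most $k+3$ vertices from its arc and from $R$ together.
\end{enumerate}

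Your Step 3 neither states nor proves such a lemma. The object you propose to split, the boundary of an arbitrary face recursively along nooses, is not the right one. So the proposal does not establish the bound.

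(A minor point: a BFS tree of $G'$ need not have the $x_F$ as leaves, since they can shorten distances. Take a BFS tree of $G+r$ and attach each $x_F$ afterwards; it then sits at depth up to $d+2$, not $d+1$.)
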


\begin{proof}
With a slight abuse of notation, we let $G_0$ denote the plane multigraph $G$ from the statement, and we let $G$ denote the plane multigraph obtained from $G_0$ by adding a new vertex $c(R)$ inside the face $R$ and making it adjacent to all vertices in $V(G_0[R])$. 
The rest of the proof will be focused on this graph, which is why we prefer to use the notation $G$ for it. 
Denote the newly created triangular faces by $R_1, R_2, \dots, R_\ell$ in a cyclic order around $c(R)$, where $\ell \coloneqq \card{V(G_0[R])}$. 
Let $\mathcal{R}=\{R_1, \ldots, R_\ell\}$ and let $\mathcal{F}$ be the set of faces of $G$ that are not in $\mathcal{R}$.  

Let $G'$ be the multigraph obtained from $G$ by adding, for every $F\in \mathcal{F}$, a new vertex $c(F)$ inside the face $F$ and making it adjacent to all vertices in $V(G[F])$. Observe that compared to the radial graph of $G$, the graph $G'$ also contains the edges of $G$.

Let $T$ be a BFS tree of $G$ rooted at $c(R)$, and let $h$ be its height. 
Observe that $h = d+1$.

The proof is split into four main steps: First, we extend $T$ into a (carefully chosen) spanning tree $T'$ of $G'$. 
Then, we modify further $T'$ into a tree $T^+$ and use it to construct a pair $(T^+, \mathcal{B})$ of a tree and a set of bags. 
Next, we show that $(T^+, \mathcal{B})$ is a tree-decomposition of $G'$.  
Finally, we prove that its width is as desired and that the vertex set of every face of $G_0$ is contained in some bag.

\textbf{Step 1: Defining the spanning tree $T'$ of $G'$.} 

Let $T^\star$ be the co-tree of $T$ w.r.t.\ $G$, which is thus a spanning tree of the dual of $G$. 
Observe that the faces $R_1, R_2, \dots, R_\ell$ are leaves of $T^\star$. 
To help the description of the arguments below, it will be convenient to color these leaves of $T^\star$ in red. 
For every subtree of $T^\star$, we define a corresponding {\em weight}, which is the number of red vertices it contains. 
We define an orientation of the edges of $T^\star$ according to these weights as follows. For every edge $xy$ of $T^\star$, orient $xy$ towards $x$ if the component of $T^\star - xy$ containing $x$ has weight 
larger than that of the component containing $y$, otherwise orient it towards $y$.
In case the two subtrees both have weight equal to $\ell/2$, choose an arbitrary orientation of the edge $xy$. 
Since $T^\star$ is a tree, there is a sink in that orientation; let $s$ denote a sink of the orientation. 
Let $S$ be the corresponding face in $G$.

\begin{claim}
$S$ is not incident to $c(R)$. 
\end{claim}
\begin{proofclaim}
Assume for contradiction that $S$ is incident to $c(R)$.     
Then, $s$ itself is a red leaf in $T^\star$.    
Let $e$ be the edge of $T^\star$ that is adjacent to $s$.  The two connected components of $T^\star-e$ are $T^\star[\{s\}]$, and $T^\star-s$.  Since $s$ is a sink of our orientation, 
the weight of $T^\star[\{s\}]$ should be at least that of $T^\star-s$. 
However, since $T^\star[\{s\}]$ has weight $1$, this implies that 
$T^\star - s$ contains at most one red leaf, and thus $T^\star$ contains at most two red leaves;  
that is, $\ell \leq 2$, which contradicts the fact that $\ell \geq 3$ implied by the definition of $\ell$.    
\end{proofclaim}

Next, we assign a weight $w(v)$ to every vertex $v$ of $G$ and a weight $w(e)$ to every edge $e$ of $G$. 
This is different from the weights defined above for subtrees of $T^\star$, though the two notions are related. 
For every vertex $v$ of $G$, let $w(v) \coloneqq 1$. 
For every edge $e$ of $G$ in $T$, let $w(e) := 0$. 
For every edge $e$ not in $T$, let $w(e)$ be the minimum weight of the two subtrees of $T^\star - e^\star$, where $e^\star$ denotes the dual edge of $e$.
Finally, for every submultigraph $H$ of $G$, we define its weight $w(H)$ to be the sum of the weights of its vertices and edges. 

Note that the boundary $G[F]$ of every face $F$ of $G$ is a cycle of $G$, since $G$ is $2$-connected. 

\begin{claim}\label{cl:weights}
The following holds: 
    \begin{enumerate}
        \item $w(e) \leq \floor{\frac \ell 2}$ for every edge $e$ of $G$,
        \item \label{it:sumz} $\sum_{e\in E(G[S])} w(e) = \ell$, and
        \item $w(G[S]) = \card{G[S]} + \sum_{e\in E(G[S])} w(e) \leq k + \ell \leq 2k$. 
    \end{enumerate}
\end{claim}
\begin{proof}
    Let $e$ be an edge of $G$.  If $e\in E(T)$, then $w(e)=0$, and we are done.  Now, assume that $e\not \in E(T)$. Then the dual edge $e^\star$ belongs to $T^\star$. Recall that the weights of the two subtrees of $T^\star-e^\star$ are integers that sum to $\ell$, so the minimum of these weights is $\floor{\frac \ell 2}$.
    This shows the first property.

    For every edge $e$ of $G[S]$, either $e\in E(T)$ in which case $w(e)=0$, or $e\notin E(T)$ and then the choice of $S$ ensures that $w(e)$ is defined as the number of red leaves of the subtree of $T^\star-e^\star$ not containing $s$.
    As these subtrees are disjoint, we get $\sum_{e\in E(G[S])} w(e) \leq \ell$. Equality follows from the fact that these subtrees contain all the red leaves of $T^\star$.
    
    The third property of the statement directly follows from the second one and the fact that $S$ has at most $k$ vertices. 
\end{proof}

Given two distinct vertices $u, v$ of the cycle $G[S]$, we call $\{u,v\}$ a \defin{splitting pair} if both paths from $u$ to $v$ on $G[S]$ have weight at most $k+2$. 

\begin{claim} 
\label{claim:splitting_pair}
    The cycle $G[S]$ has a splitting pair.
\end{claim}

\begin{proofclaim} 
    We rephrase the problem as a necklace splitting problem. 
    Our necklace $N$ is a cycle with its vertices colored red or blue, obtained as follows: 
    Start with the cycle $G[S]$, color all its vertices blue, then subdivide every edge $e$ of $G[S]$ with $w(e)$ internal vertices, which are all colored red.
    Next, if $\card{G[S]} < k$, then subdivide  $k -|G[S]|$ times an arbitrarily chosen edge $e_b$ incident to a blue vertex $v_b$ and color blue the newly introduced vertices---these vertices are said to \defin{correspond} to $v_b$---and finally, if $\ell < k$, then subdivide $k - \ell$ times an arbitrarily chosen edge $e_r$ incident to a red vertex  and color red the newly introduced vertices.    
    By the fact that $S$ has at most $k$ vertices and item \ref{it:sumz} of Claim~\ref{cl:weights}, the construction above results in a cycle $N$ that has $k$ red vertices and $k$ blue vertices. 

    Observe that, by construction, if $u, v$ are two distinct blue vertices of $N$, then $u, v$ correspond to vertices $u', v'$ of $G[S]$ such that, for each of the two paths $P$ between $u$ and $v$ on $N$, the number of vertices of $P$ is at least the weight $w(P')$ of the corresponding path $P'$ between $u'$ and $v'$ on $G[S]$. 
        
    Since $N$ has even length, every vertex $u$ of $N$ has a well-defined \defin{opposite vertex} $v$ that is at distance exactly $k$ on $N$. 
    (Note that $u$ is then the opposite vertex of $v$.) 
    
    {\bf Case 1: There exists a pair of opposite blue vertices.} 
    Let $u,v$ be such a pair and let $u', v'$ be the corresponding vertices on $G[S]$. 
    Since the two paths between $u$ and $v$ on $N$ both have $k+1$ vertices, the corresponding paths between $u$ and $v$ on $G[S]$ have weight at most $k+1$, and hence $\{u', v'\}$ is the desired splitting pair. 

    {\bf Case 2: Every pair of opposite vertices has at least one red vertex.} 
    Since there are exactly $k$ red vertices and $k$ blue vertices on $N$, and there are $k$ distinct pairs of opposite vertices, it follows that every such pair contains exactly one red vertex and one blue vertex.     
    Let $\widetilde{u}, u$ be two vertices such that $\widetilde{u}$ is colored red and $u$ is a neighbor of $\widetilde{u}$ that is colored blue.  Let $v$ be the vertex opposed to $\widetilde{u}$.  Thus, $v$ is colored blue.
    Then each of the two paths between $u, v$ on $N$ has at most $k+2$ vertices, and thus the two paths between the vertices $u', v'$ of $G[S]$ corresponding to $u, v$ both have weight at most $k+2$, and hence  $\{u', v'\}$ is the desired splitting pair. 
    \end{proofclaim}

For each face $F\in \mathcal{F}$, we define a corresponding pair $p(F), o(F)$ of vertices of $F$ as follows. 
If $F=S$, we choose these two vertices in such a way that $\{p(F), o(F)\}$ is a splitting pair of $S$. 
If $F\neq S$,  we choose $p(F), o(F)$ so that they are at distance $\left \lfloor |F| / 2\right \rfloor$ on the cycle $G[F]$.

Let $T'$ be the spanning tree of $G'$ obtained from $T$ as follows: For every face $F$ of $G$ considered in the previous paragraph, 
we add the edge $p(F)c(F)$ to $T$. Observe that $T'$ is a rooted tree of height at most $d+2$, and that every vertex $c(F)$ is a leaf of $T'$, with parent $p(F)$. 
(We remark also that the vertex $o(F)$ is roughly opposite to $p(F)$ on the cycle $G[F]$, hence the choice of the letter $o$.) 

\textbf{Step 2. Building a tree-decomposition of $G'$.}

Let $T'^\star$ be the co-tree of $T'$ w.r.t.\ $G'$. 
In order to define the tree indexing our tree-decomposition of $G'$, we need to slightly modify $T'^\star$ as follows: Let $T^+$
be the tree obtained from $T'^\star$ by subdividing once the edge of $T'^\star$ corresponding to the edge $o(F)c(F)$ of $G'$, for every face $F\in \mathcal{F}$; we let $x^+(F)$ denote the newly introduced subdivision vertex and let $\mathcal X^+$ denote the set of these subdivision vertices. Conversely, for every $x\in \mathcal X^+$ we denote by $F(x)$ the face in $\mathcal{F}$ such that $x=x^+(F(x))$.  

\begin{figure}[h]
    \centering
    \begin{tikzpicture}[
    every node/.style={draw=none, fill=white, inner sep=2pt},
  vertex/.style={
    circle,
    draw,
    fill=black,
    inner sep=2pt
  }, svertex/.style={
    rectangle,
    draw,
    fill=white,
    inner sep=2pt
  }
]

  \definecolor{col1}{HTML}{F59E0B}
  \definecolor{col2}{HTML}{2563EB}
  \definecolor{col3}{HTML}{2e8b57}

  \node[vertex, label=90:$p(F)$] (v1) at (90:2cm)  {};
  \node[vertex] (v2) at (18:2cm)  {};
  \node[vertex] (v3) at (-54:2cm) {};
  \node[vertex, label=-90:$o(F)$] (v4) at (-126:2cm) {};
  \node[vertex] (v5) at (162:2cm) {};

  \node[vertex] (c) at (0,0) {};

  \node[svertex] (f12) at (54:1.05cm)  {};
  \node[svertex, label=-90:$u$] (f23) at (-18:1.05cm)  {};
  \node[svertex] (f34) at (-90:1.05cm)  {};
  \node[svertex] (f45) at (-162:1.05cm) {};
  \node[svertex] (f51) at (126:1.05cm)  {};

  \draw[very thick] (v5) -- (v1) -- (v2) -- (v3) -- (v4);
  \draw (v4) -- (v5);

  \draw[very thick] (v3) -- ++(-54:1cm) node[draw=none, label=70:$T'$]{};
  
  \draw[very thick] (c) -- (v1);
  \draw (c) -- (v2)
        (c) -- (v3)
        (c) -- (v4)
        (c) -- (v5);
  
  \begin{scope}[every path/.style = {dash pattern=on 1pt off 1pt on 2pt off 1pt}]
    \draw (f12) -- (f23) -- (f34);
    \draw (f34) -- node[pos=0.3, solid, svertex] (x+) {} (f45);
    \draw (f45) -- (f51);
    \draw (f45) --++(-155:1.25cm) node[draw=none, label=$T^+$] {};
  \end{scope}

  \draw[<-] (x+) to[bend right] ++(-65:1.5cm) node {$x^+(F)$};
  \draw[<-] (c)[xshift=0.1cm] to[bend left] (45:2.5cm) node {$c(F)$};
  \draw[<-] (1.25, -0.25) to (2.5, -0.25) node[anchor = west] {in \textcolor{col1}{orange}, the face $F'(u)$ of $G'$};
  \draw (2.5cm, 0.9cm) node[anchor = west] (lvo) {in \textcolor{col2}{blue} or \textcolor{col1}{orange}, the face $F$ of $G_0$, that is equal to $F(u)$};
  \draw[<-] (28:1.5cm) to[bend left] (lvo.180);
  \draw[<-] (8:1.5cm) to[bend right] (lvo.180);
  
  \draw[<-] (1.5, -1) to (2.5, -1) node[anchor = west] {in \textcolor{col3}{green}, the $p(F)$--$o(F)$ path whose vertex set defines $\alpha(u)$};
  
  \begin{scope}[on background layer]
    \fill[col2] (v1.center) -- (v2.center) -- (v3.center) -- (v4.center) -- (v5.center) -- cycle;
    \fill[col1] (v2.center) -- (c.center) -- (v3.center) -- cycle;
    \draw[line width = 0.35cm, line join=round, line cap=round, color = col3] (v1.center) -- (v2.center) -- (v3.center) -- (v4.center); 
  \end{scope}
\end{tikzpicture}
    \caption{The different notations introduced in Step 2.}
    \label{fig:face}
\end{figure}

\begin{figure}[h]
    \centering
    \includegraphics[width=01\linewidth]{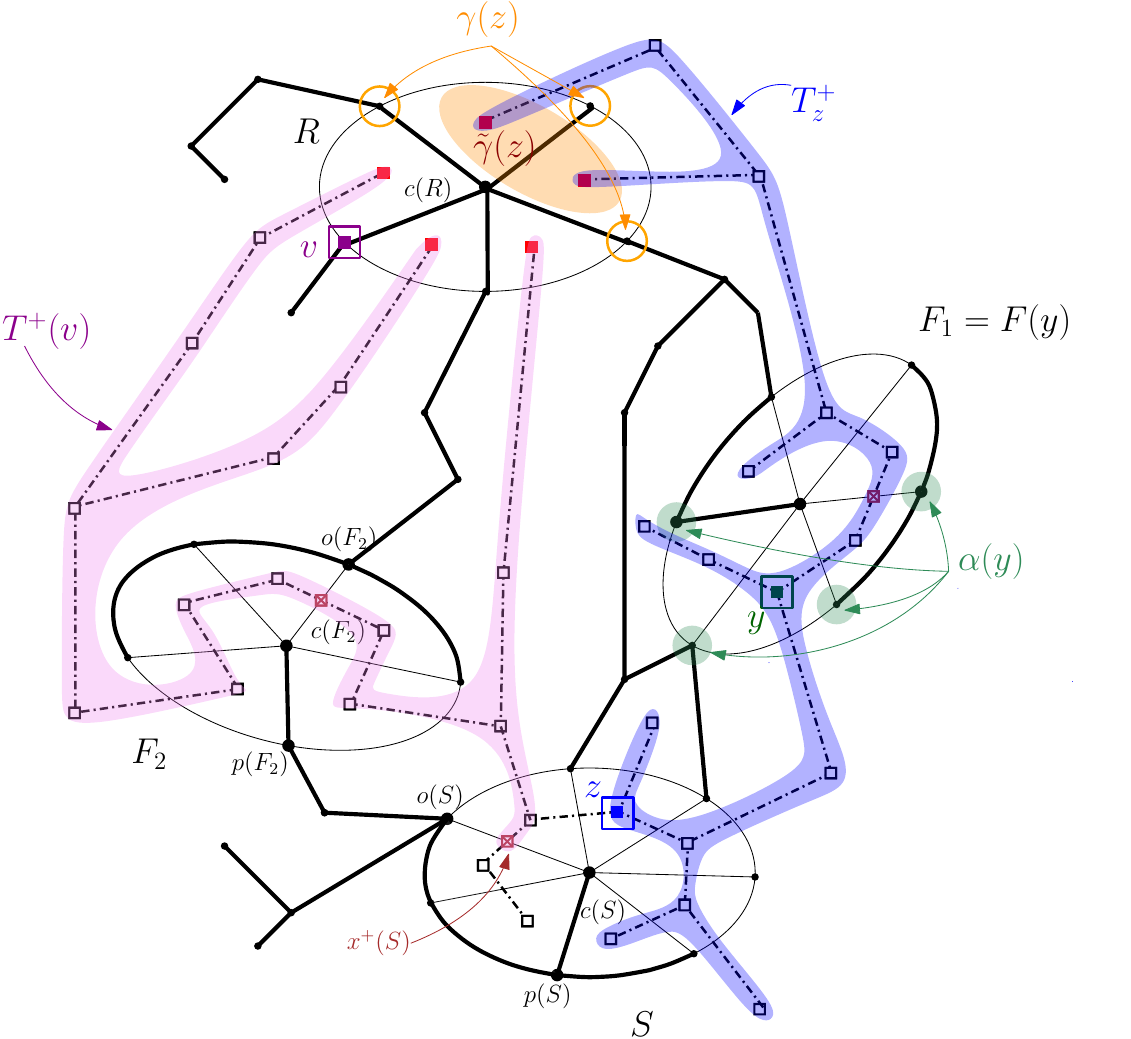}
    \caption{This figure summarizes all the notations introduced in Step 1 and Step 2.  This is a partial drawing of $G'$ together with its spanning tree $T'$ (heavy black edges) and $T^+$, the extension of the dual of $T'$ (squared vertices and dash dotted edges). The vertices of $T^+$ corresponding to $\{R_1,\ldots,R_\ell\}$ are colored in red.}
    \label{fig:t+x}
\end{figure}

Observe that every $x\in V(T^+) \setminus \mathcal X^+$ corresponds to a (triangular) face of $G'$, that we call $F'(x)$. Also, $F'(x)$ contains a unique vertex of the form $c(F)$ for some face $F$ of $G_0$. We call this face $F(x)$.  
In case $F(x)$ is not $R$, we also define $\alpha(x)$ as follows: Consider the (unique) $p(F(x))$--$o(F(x))$ path on the cycle $G[F(x)]$ that contains an edge of $G'[F'(x)]$, and let $\alpha(x)$ denote the vertex set of that path. 

In the rest of the proof it will be convenient to see $T^+$ as a rooted tree, rooted at $x^+(S)$; given a vertex $x$ of $T^+$, the \defin{subtree $T^+_x$} is the subtree of $T^+$ spanned by $x$ and all its descendants in $T^+$. 
For every vertex $x\in V(T^+)$, we define $\widetilde{\gamma}(x)$ as the set of red vertices (i.e.\ the set of vertices of $T^+$ corresponding to the faces $R_1, \ldots, R_\ell$) in the subtree $T^+_x$, and $\gamma(x)$ as the set of vertices of $G_0$ in faces corresponding to vertices in $\widetilde{\gamma}(x)$. We highlight the fact that $c(R)\not \in \gamma(x)$.  

We define a tree-decomposition $(T^+, \mathcal{B})$ of $G'$ indexed by the tree $T^+$ as follows, where $\mathcal{B}=\{\beta_x \mid x\in V(T^+)\}$ is the set of bags and 
\[\beta_x \coloneqq \begin{cases}
    \gamma(x) \cup \left(\bigcup_{u \in V(G'[F'(x)])}  V\left(uT'c(R)\right)\right) \cup \alpha(x) & \text{ if $x\notin \mathcal{X}^+$ and $F(x)\in \mathcal F$} \\[2ex] 
    \gamma(x)\cup\left ( \bigcup_{u \in V(G'[F'(x)])} V\left(uT'c(R)\right ) \right )  & \text{ if $x\notin \mathcal{X}^+$ and  $F(x)=R$}\\[2ex]
    \gamma(x) \cup 
    \left(\bigcup_{u \in \{c(F(x)),o(F(x))\}}  V\left(uT'c(R)\right)\right)
    \cup V\left(G[F(x)]\right)& \text{ if $x\in \mathcal{X}^+$}. 
    
\end{cases}\]

\textbf{Step 3. Proving that $(T^+, \mathcal B)$ is a tree-decomposition of $G'$.} 

To prove that $(T^+, \mathcal{B})$ is a tree-decomposition of $G'$, we first define another, simpler, tree-decomposition $(T'^\star, \mathcal{B}')$ of $G'$, indexed by the co-tree $T'^\star$ of $T'$ w.r.t.\ $G'$, where $\mathcal{B}'=\{\beta'_x \mid x\in V(T'^\star)\}$ and the bags are defined as follows: 
For each $x\in V(T'^\star)$, 
\[
\beta'_x \coloneqq \bigcup_{u \in V(G'[F'(x)])}  V\left(uT'c(R)\right). 
\]
First, we show that $(T'^\star, \mathcal{B}')$ is indeed a tree-decomposition of $G'$. 
Then, by considering how $(T^+, \mathcal{B})$ can be obtained by modifying $(T'^\star, \mathcal{B}')$, we will deduce that $(T^+, \mathcal{B})$ is also a tree-decomposition of $G'$. 
Note that $(T'^\star, \mathcal{B}')$ is already a tree decomposition of $G'$ (and hence of the subgraph $G_0$) with bags of sufficiently small size $(\le 3d+O(1)$) such that the vertex set of each face of $G'$ is contained in some bag. However, we do not yet have the same property for every face of the original graph $G_0$, and that is why we need the further modification of $(T'^\star, \mathcal{B}')$ to $(T^+,\mathcal{B})$.

Since the vertex set of each face of $G'$ is contained in some bag of $\mathcal{B}'$, in particular we have that every vertex and every edge of $G'$ is included in some bag of $\mathcal{B}'$. 
To show that $(T'^\star, \mathcal{B}')$ is a tree-decomposition of $G'$, it remains to show that the vertices of $T'^\star$ whose bags contain a fixed vertex of $G'$ span a connected subgraph of $T'^\star$, which is a consequence of the following claim.

\begin{claim}\label{tocheck: quentin}
Let $x_1 x_2 \dots x_m$ be a path in $T'^\star$. If some $v\in V(G')$ belongs to both $\beta'_{x_1}$ and $\beta'_{x_m}$, then it belongs to $\beta'_{x_i}$ for every $i\in [m]$.     
\end{claim}
\begin{proofclaim}
Arguing by contradiction, suppose that  $v\notin \beta'_{x_i}$ for some $i\in [m]$, and let $j$ be a smallest such index $i$. 
Note that $2 \leq j \leq m-1$. 
Let $ab$ denote the edge of $G'$ dual to $x_{j-1}x_j$ of $T'^\star$. 
Note that $ab \notin E(T)$. 

Observe that $a,b\in V(G'[F'(x_{j-1})]) \cap V(G'[F'(x_{j})])$. 
Since $v\notin \beta'_{x_j}$, it follows that 
\begin{equation}
\label{eq:v_not_in_two_paths}
v\notin V(aT'c(R)) \cup V(bT'c(R)).     
\end{equation}

Let $C \coloneqq aT'b + ab$. 
Then $C$ is a cycle of $G'$ that avoids $v$. 
Seeing $C$ as a plane graph, let $Q_1, Q_2$ denote the two corresponding faces of its drawing. 
Let us emphasize that $ab$ is the only edge of $C$ that is not in $T$. 
Also, one of the two faces $F'(x_{j-1}), F'(x_{j})$ is contained in $Q_1$ and the other in $Q_2$, 
say without loss of generality $F'(x_{j-1})$ is in $Q_1$ and $F'(x_{j})$ in $Q_2$. 
By planarity and because the edges of the path $x_1\dots x_m$ can cross the cycle $C$ only once (in $x_{j-1}x_j$, because the other edges of $C$ belong to $T'$), it follows that $F'(x_{i})$ is included in $Q_1$ for every $i\in [j-1]$, 
and that $F'(x_{i})$ is included in $Q_2$ for every $i\in [m] \setminus [j-1]$. 

Now, observe that $v$ is in the interior of $Q_1$ or $Q_2$, since $v$ is not on $C$. 
Reversing the numbering of the vertices of $P$ if necessary, we may assume that $v$ is in the interior of $Q_1$ (while maintaining the assumptions above). 
Since $v\in \beta'_{x_m}$, the vertex $v$ belongs to $V(uT'c(R))$ for some vertex $u$ in $G'[F'(x_{m})]$. 
Since the face $F'(x_{m})$ is contained in $Q_2$, and since $v$ is in the interior of $Q_1$, it follows that the path 
$uT'c(R)$ intersects the cycle $C$. 

Let $u'$ be the first vertex of the path $uT'c(R)$ that is in $C$ starting from $u$ (possibly $u'=u$). 
Then $v$ is in the path $u'T'c(R)$. 
However, since $u'$ is in $C$, and thus in $aT'b$, we also know that $u'$ is in $aT'c(R)$ or in $bT'c(R)$, say $aT'c(R)$. 
Therefore, the path $u'T'c(R)$ is a suffix of the path $aT'c(R)$, and we deduce that $v$ is in the path $aT'c(R)$, contradicting \eqref{eq:v_not_in_two_paths}.
\end{proofclaim}

Now, observe that $(T^+, \mathcal B)$ can be obtained from $(T'^\star, \mathcal B')$ by the following sequence of operations:

\begin{itemize}
    \item For every face $F\in \mathcal F$, we subdivide the edge $x_1x_2$ of $T'^\star$ that is dual to the edge $c(F)o(F)$ of $G'$ and define at first the bag of this new vertex $x^+(F)$ as 
    \[
    V\left(o(F)T'c(R)\right)\cup V\left(c(F)T'c(R)\right).
    \]     
    For $i=1,2$, let $F_i$ be the face of $G'$ corresponding to $x_i$, and let $v_i$ be the vertex of $G'[F_i]$ that is not in $\{o(F),c(F)\}$. 
    Observe that, by planarity, 
    \[
    V\left(v_1T'c(R)\right)\cap V\left(v_2T'c(R)\right)\subseteq V\left(o(F)T'c(R)\right)\cup V\left(c(F)T'c(R)\right).
    \]
    It follows that 
    \[
    V\left(o(F)T'c(R)\right)\cup V\left(c(F)T'c(R)\right) = \beta'_{x_1} \cap \beta'_{x_2}, 
    \]
    that is, the bag of $x^+(F)$ is equal to the intersection of the bags of its two neighbors $x_1, x_2$. 
    It is clear that the result is still a tree-decomposition of $G'$.
    
    \item For every vertex $v$ of $G_0$ and every face $F\in \mathcal{F}$ with $v\in V(G_0[F])$, we proceed as follows: 
    Let $P$ be the subgraph of $T^+$ induced by $x^+(F)$ and the vertices of $T^+$ corresponding to faces of $G'$ incident to $c(F)$. 
    Observe that $P$ is a path. 
    We will add $v$ to the bags of either all vertices of $P$, or to one of the two \say{halves} of $P$ w.r.t.\ vertex $x^+(F)$. 
    \begin{itemize}
    \item If $v=p(F)$, then observe that $v$ is already in the bags of all vertices in $P$, since $v$ is on the $c(F)T'c(R)$ path.     
    \item If $v=o(F)$, then $v$ is already in the bags of $x^+(F)$ and of its two neighbors on $P$; we add $v$ to every bag of vertices in $P$. 
    \item If $v\notin\{o(F), p(F)\}$, then enumerate the vertices of $P$ in order as $x_1, \dots, x_m, x^+(F), y_1, \dots, y_q$, in such a way that $v$ is incident to a face $F'$ of $G'$ corresponding to $x_i$ for some $i\in [m]$ (note that there are two such faces). Observe that $v$ is already in the bag of $x_i$. We add $v$ to all the bags of $x_1, \dots, x_m, x^+(F)$. 
    \end{itemize}
    Again, it is clear that the result is still a tree-decomposition of $G'$. 
    As a result of these modifications, $V(G[F])$ is now included in the bag of $x^+(F)$ for every $F\in \mathcal{F}$; 
    also, $\alpha(x)$ is now included in the bag of $x$ for every $x\in V(T^+)$ with  $x \notin \mathcal{X}^+$ and $F(x) \in \mathcal{F}$.  
    \item For every vertex $v\in V(G_0[R])\cup \{c(R)\}$, we add $v$ to the bag of every vertex of 
    \[
    T^+(v)\coloneqq T^+[\{x\in V(T^+)\mid v\in \gamma (x)\}]. 
    \]
    Let $i \in [\ell]$ be such that $v$ is incident to the face $R_i$ of $G'$, and let $x$ be the vertex of $T^+$ corresponding to $R_i$. 
    Observe that $v$ was already in the bag of $x$, and that $v\in \gamma(x)$, and hence $x$ is in $T^+(v)$.  
    Thus, in order to show that the operation above results in a tree-decomposition of $G'$, it suffices to show that $T^+(v)$  is connected. This is shown in \cref{clm: add_gamma} hereunder. 
\end{itemize}

\begin{claim} \label{clm: add_gamma}
    Let $v\in V(G_0[R])\cup \{c(R)\}$.  Then, $T^+(v)$ is connected. 
\end{claim}
\begin{proofclaim}
   If $v=c(R)$, then $T^+(v)$ is the empty graph, and we are done.  Now, suppose that $v\in V(G_0[R])$.  Let $i, j$ be such that $R_i$ and $R_j$ are the two faces of $G'$ whose boundaries contain the edge $c(R)v$.   
   Let $r_i, r_j\in V(T^+)$ be the vertices corresponding to $R_i$ and $R_j$, respectively.  

Let $x\in V(T^+)$ be such that $v\in \gamma(x)$.
By definition of $\gamma$, there exists a face $F'\in \mathcal R$ such that $v\in V(G[F'])$, and the vertex $r \in V(T^+)$ corresponding to $F'$ is such that $r\in \widetilde{\gamma}(x)$.
However, the two only possible choices for such an $r$ are $r=r_i$ and $r=r_j$. 
It follows that 
\[
v\in \gamma(x) \; \iff \; r_i\in \widetilde{\gamma}(x) \textrm{ or } r_j\in \widetilde{\gamma}(x).
\]
Thus, $T^+(v)= T^+[\{x\in V(T^+)\mid r_i\in \widetilde{\gamma}(x) \lor r_j\in \widetilde{\gamma}(x)\}]$.

By definition $\{x\in V(T^+) \mid r_i\in \widetilde{\gamma}(x)\}$ is the set of ancestors of $r_i$ in $T^+$ (recall that $T^+$ is rooted at $x^+(S)$), and the same goes for $r_j$.
Thus, $T^+(v)$ is the subtree of $T^+$ induced by the vertex sets of two paths both having the root as an endpoint, and thus is connected.  
\end{proofclaim}

This concludes the proof that $(T^+,\mathcal B)$ is a tree-decomposition of $G'$.

\textbf{Step 4. Proving that $(T^+, \mathcal{B})$ has the desired properties.} 

First, observe that $V(R)\subseteq \beta_{x^+(S)}$ (because $V(R)=\gamma(x^+(S))$) and that, for every face $F$ of $\mathcal{F}$, we have $V(G[F])\subseteq \beta_{x^+(F)}$.
Hence, the vertex set of every face of $G_0$ is contained in some bag of $\mathcal{B}$. 

Next, we bound the size of the bags. 
\begin{claim}\label{claim:bag_normal}
    $\left|\bigcup_{u\in V(G'[F'(x)])}V\left(uT'c(R)\right)\right|\leq 3d+5$ for every $x\in V(T^+)\setminus \mathcal X^+$.
\end{claim}

\begin{proofclaim}
    Let $x\in V(T^+)\setminus \mathcal X^+$. 
    Then the face $F'(x)$ of $G'$ is incident to three vertices, $c(F(x))$ and two vertices $v_1, v_2$ of $G$.  
    The vertices $v_1, v_2$ are at distance at most $d+1$ from $c(R)$ in $T'$, and $c(F(x))$ is at distance at most $d+2$ from $c(R)$ in $T'$. 
    Since the three paths $v_1T'c(R)$, $v_2T'c(R)$, $c(F(x))T'c(R)$ all have $c(R)$ in common, we deduce that the union of these three paths has at most $3d+5$ vertices.      
\end{proofclaim}

\begin{claim}\label{claim:consecutive}
$|\gamma(x)|\leq |\widetilde{\gamma}(x)|+1$ for every $x\in V(T^+)$ such that $F(x)\ne S$. 
\end{claim}
\begin{proofclaim}
Let $x\in V(T^+)$ with $F(x)\ne S$. 
Observe that, by planarity, the red vertices in $\widetilde{\gamma}(x)$ correspond to consecutive faces around $c(R)$.  
It follows that $|\gamma(x)|\leq |\widetilde{\gamma}(x)|+1$. 
\end{proofclaim}

\begin{claim}\label{claim:size_gamma}
$|\gamma(x)|\leq \lfloor \frac k 2 \rfloor +1$ for every $x\in V(T^+)$ such that $F(x)\ne S$. 
\end{claim}
\begin{proofclaim}
Let $x\in V(T^+)$ be such that $F(x)\ne S$ and let $y\in V(T^+)$ be the closest ancestor of $x$ in $T^+$ such that $F(y)=S$. 
    Then, $\widetilde{\gamma}(x)$ is included in the set of red leaves of $T^+_y$. 
    Let $e$ be the unique edge of $G'[F'(y)]$ that belongs to $S$.
    Observe that the number of red leaves of $T^+_y$ is exactly the weight of $e$.
    Since $e$ is an edge of $G[S]$, we already observed that $w(e)\leq \lfloor \frac \ell 2 \rfloor\leq \lfloor \frac k 2 \rfloor$.
    Therefore, by definition of $\gamma$ and by \cref{claim:consecutive}, we have $|\gamma(x)|\leq |\widetilde{\gamma}(x)|+1\leq \lfloor \frac k 2 \rfloor +1$. 
\end{proofclaim}

\begin{claim}\label{clm:bounding_outside_of_X+}
    For every $x\in V(T^+)\setminus \mathcal X^+$, 
    \[
    |\beta_x|\leq
    \begin{cases}
         3d+\frac k 2 +6 & \textrm{ if } F(x) = R, \\  
         3d+k+8 & \textrm{ if } F(x)\in \mathcal F. 
    \end{cases}
    \]    
\end{claim}
\begin{proofclaim}
Let $x\in V(T^+)\setminus \mathcal X^+$. 
If $F(x) = R$, then by \cref{claim:bag_normal} and \cref{claim:size_gamma}, 
    \[\card{\beta_x}
    \leq \card{\gamma(x)} + \card{\bigcup_{u \in V(G'[F'(x)])} V\left(uT'c(R)\right)}
    \leq \frac{k}{2} + 1 + 3d + 5
    = 3d +\frac{k}{2} + 6.\]

    Now suppose that $F(x) \in \mathcal F$. 
    By \cref{claim:bag_normal}, we already know that 
    \[
    \card{\bigcup_{u \in V(G'[F'(x)])} V\left(uT'c(R)\right)} \leq 3d + 5
    \]
    Thus, to prove that $|\beta_x|\leq 3d+k+8$, it is enough to show that $\card{\alpha(x)} + \card{\gamma(x)} \le k + 3$, which we do now. 

    If $F(x) \ne S$, then $\alpha(x)$ is the vertex set of a path of length at most $\left \lceil \frac k 2 \right \rceil$, and thus $|\alpha(x)|\leq \left \lceil \frac k 2 \right \rceil + 1$. 
    Since $|\gamma(x)|\leq \lfloor \frac k 2 \rfloor +1$ by \cref{claim:size_gamma}, 
    it follows that $|\alpha(x)|+|\gamma(x)|\leq k+2 < k + 3$.

    Finally, suppose that $F(x) = S$. 
    Let $e$ be the only edge of $G'[F'(x)]$ that is also an edge of $G$, 
    and let $P$ be the $p(F)$--$o(F)$ path of $G[S]$ containing $e$.     
    Note that 
    \[
    |\alpha (x)|=|V(P)|=\sum_{v\in V(P)} w(v).  
    \]
    Let $P'$ be the subpath of $P$ whose first vertex is $p(S)$ and whose last edge is $e$.
    By definition of $T_x^+$ for $x\in V(T^+)$,
    \[
    |\widetilde{\gamma}(x)| = \sum_{f\in E(P')}w(f)\leq \sum_{f\in E(P)}w(f)
    \] 
    (see \cref{fig:t+x}). 
    Since $\{p(S), o(S)\}$ is a splitting pair, we know that $w(P) \le k+2$.  
    Using \cref{claim:consecutive}, it follows that  
    \[|\alpha(x)|+|\gamma(x)|\leq \sum_{v\in V(P)} w(v)+\sum_{f\in E(P)}w(f) + 1 \leq w(P) + 1 \leq k+3,
    \] 
    as desired. 
\end{proofclaim}

\begin{claim} \label{clm:boundind_X+}
For every $x\in \mathcal X^+$, 
    \[
    |\beta_x|\leq
    \begin{cases}
         2d+k+\lfloor \frac k 2 \rfloor +5 & \textrm{ if } x\neq x^+(S), \\  
         2d+2k+4 & \textrm{ if } x = x^+(S). 
    \end{cases}
    \]        
\end{claim}
\begin{proofclaim}
Let $x\in \mathcal X^+$. 
Recall that
\[\beta_x = \gamma(x)\cup \left(V(c(F(x))T'c(R)) \cup V(o(F(x))T'c(R))\right) \cup V(G[F(x)]).\]
The path $c(F(x))T'c(R)$ contains at most $d+3$ vertices while the path $o(F(x))T'c(R)$ contains at most $d+2$ vertices.  Moreover, they both contain the vertex $c(R)$.
Thus
\[\card{V(c(F(x))T'c(R)) \cup V(o(F(x))T'c(R))} \leq 2d+4.\]
Furthermore, $\card{V(G[F(x)])} \leq k$.

If $x\ne x^+(S)$, then $F(x)\ne S$, and $|\gamma(x)|\leq \lfloor \frac k 2 \rfloor +1$ by \cref{claim:size_gamma}.  
If $x=x^+(S)$, then  $\gamma(x)=V(G_0[R])$, which implies $|\gamma(x)|\leq k$.  The result follows.
\end{proofclaim}

By \cref{clm:bounding_outside_of_X+} and \cref{clm:boundind_X+}, we deduce that $(T^+, \mathcal{B})$ is a tree-decomposition of $G'$ of width at most $\max\{3d+k+8, 2d+2k+4\}-1$.

Let $\mathcal{B}_0 = \{\beta_x \cap V(G_0) \mid x \in V(T^+)\}$.
Since $G_0$ is a submultigraph of $G'$, $(T^+, \mathcal{B}_0)$ is a tree-decomposition of $G_0$.

Observe that the vertex $c(R)$, which is not a vertex of $G_0$, is included in every bag of $\mathcal{B}$, and that, for every $x\in V(T^+)$ such that $F(x)\neq R$, the vertex $c(F(x))$ belongs to $\beta_x$ but not to $V(G_0)$.  Thus, for every $x\in V(T^+)$, we have that if $F(x)\neq R$, the bag $\beta_x$ contains two vertices of $G$ that are not vertices of $G_0$ and, if $F(x)= R$, the bag $\beta_x$ contains one vertex of $G$ that is not a vertex of $G_0$.
This observation, combined with \cref{clm:bounding_outside_of_X+} and \cref{clm:boundind_X+} imply that the width of $(T^+, \mathcal B_0)$ is at most $\max\{3d+k+5, 2d+2k+1\}$, which ends the proof. 
\end{proof}

While \cref{thm:td_faces} gives a tree-decomposition accommodating all faces of a given $2$-connected loopless plane multigraph graph $G$, for our purposes we need a variant where we only accommodate a given subset $\mathcal{F}$ of faces.  
This is handled by the following corollary. 

\begin{cor}
\label{cor:td_faces}
    Let $G$ be a $2$-connected loopless plane multigraph, let $\mathcal{F}$ be a set of faces of $G$, and let $R\in \mathcal{F}$.      
    Let $d$ be the maximum distance in $G$ between $V(R)$ and any vertex of $G$.  
    Suppose that every face in $\mathcal{F}$ has size at most $k$. 
    Then $G$ admits a tree-decomposition of width at most $\max\{3d+ k+5, 2d+2k+1\}$ such that the vertex set of every face of $\mathcal{F}$ is contained in some bag.
\end{cor}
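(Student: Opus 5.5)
The plan is to reduce \cref{cor:td_faces} to \cref{thm:td_faces} by modifying $G$ so that the faces not in $\mathcal{F}$ become small, while neither increasing distances from $R$ nor destroying the faces of $\mathcal{F}$. Concretely, for each face $F \notin \mathcal{F}$ of $G$, I will triangulate it: since $G$ is $2$-connected, $G[F]$ is a cycle, and I add chords inside $F$ (say, a fan from one vertex of $G[F]$, or a zigzag triangulation) so that $F$ is subdivided into triangular faces. Multiple edges are allowed, so a chord duplicating an existing edge elsewhere causes no harm, and no loops are created because chords join distinct vertices of the cycle. Call the resulting plane multigraph $G_1$.

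Next I verify the hypotheses of \cref{thm:td_faces} for $G_1$ with the same face $R$. The graph $G_1$ is still $2$-connected and loopless, since we only added edges to a $2$-connected graph. Every face of $\mathcal{F}$ is untouched, so it is still a face of $G_1$ with the same vertex set; in particular $R$ is a face of $G_1$. The new faces are triangles with $3$ vertices, so the maximum face size $k_1$ of $G_1$ is at most $\max\{k,3\}$. Because $R$ is a face of a $2$-connected graph, $k \ge 3$, and therefore $k_1 \le k$. Adding edges can only shorten distances, so the distance parameter $d_1$ of $G_1$ (measured from $V(G_1[R]) = V(R)$) satisfies $d_1 \le d$.

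Applying \cref{thm:td_faces} to $G_1$ yields a tree-decomposition of $G_1$ of width at most $\max\{3d_1+k_1+5,\,2d_1+2k_1+1\} \le \max\{3d+k+5,\,2d+2k+1\}$ in which every face of $G_1$, and hence every face of $\mathcal{F}$, has its vertex set inside some bag. Since $G$ is a spanning subgraph of $G_1$, the same decomposition is a tree-decomposition of $G$, which gives the corollary.

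The only delicate point is making sure the triangulation step preserves the hypotheses. First, a face not in $\mathcal{F}$ that is already a triangle, or that is a digon formed by parallel edges, simply needs no chords. Second, the new edges must not alter any face in $\mathcal{F}$; this holds because every chord is drawn inside a face outside $\mathcal{F}$. I do not expect any genuine obstacle beyond these checks.
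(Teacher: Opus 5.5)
Your proposal is correct and is essentially the paper's own proof: triangulate the faces outside $\mathcal{F}$, check that $2$-connectivity, the faces in $\mathcal{F}$, the bound $k$ on face sizes and the distance bound $d$ from $V(R)$ all survive, then apply \cref{thm:td_faces} to the supergraph, whose tree-decomposition is also one of $G$. The only imprecision is your justification of $k\ge 3$: in a multigraph a face, even $R$, can be a digon, so this really rests on $|V(R)|\ge 3$, which the proof of \cref{thm:td_faces} also needs and which the paper likewise assumes without comment.
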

\begin{proof}
    Let $H$ be obtained from $G$ by triangulating arbitrarily every face $F$ of $G$ that has at least four vertices, and is not in $\mathcal{F}$, by repeatedly adding edges inside the face $F$ between non-consecutive vertices of the cycle $G[F]$. (It is well-known that this is always possible.) 
    Observe that $H$ is $2$-connected, every face of $H$ has size at most $k$, and every vertex of $H$ is at distance at most $d$ from $V(R)$ in $H$. 
    Thus, applying \cref{thm:td_faces} on $H$ with face $R$, we obtain a tree-decomposition of $H$, and thus of $G$ as well, of the desired width, such that the vertex set of every face of $H$ is contained in some bag. In particular, this holds for the faces in $\mathcal{F}$. 
\end{proof}

\section{Proof of Main Theorem}
\label{sec:GridMinor}

In order prove \cref{thm:main}, 
we prove the following slightly stronger statement, which helps the induction go through. 

\begin{restatable}[Main Technical Theorem]{theorem}{MainTechnicalTheorem}
\label{thm:main induction}
Let $k, h$ be integers with $k\geq 3$ and $h\geq 2$. 
Let $G$ be a $2$-connected plane graph that does not contain a \cgrid{k}{h} as a minor.  
Then there exists a tree-decomposition of $G$ of width at most $\max\{3h+k+4,2h+2k-1\}$. 
Moreover, if the outer face $F$ of $G$ is such that $|V(F)|\leq k-1$, 
then there exists a tree-decomposition of $G$ of the same width such that $V(F)$ is contained in one of the bags. 
\end{restatable}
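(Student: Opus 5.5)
I will prove the theorem by induction on $|V(G)|$, following the global scheme of Gu and Tamaki, and use \cref{cor:td_faces} as the black box for the shallow part. Set $w \coloneqq \max\{3h+k+4, 2h+2k-1\}$. It suffices to prove the \say{moreover} form: if the outer face $F$ has at least $k$ vertices, then any tree-decomposition of width $w$ is enough. In that case I apply the argument below to every face $F'$ with $|V(F')|\le k-1$ chosen as root face after re-embedding. If no such face exists, I root at an arbitrary face and drop the requirement that $V(F)$ lies in a bag. So assume $|V(F)|\le k-1$, and take $F$ as the root face $R$.

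\textbf{Layering and nooses.} Work in the radial graph $\mathcal{R}_G$ and consider the vertices of $G$ at distance at least $h-1$ from $V(F)$. More precisely, for each \say{deep} component, take the noose obtained as the boundary at radial distance of about $h-1$ from $F$. For each maximal region $D$ separated from $F$ by such a noose $N$, I distinguish two cases according to $|V_G(N)|$.

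\begin{itemize}
\item If some noose $N$ at depth $h-1$ has $|V_G(N)|\ge k$ and encloses a region still reaching depth $h$, then I extract a \cgrid{k}{h}. The $k$ vertices on the noose, together with $h$ nested disjoint cycles coming from the layers between $F$ and $N$, and Menger-type radial paths obtained by planarity (each layer noose separates $F$ from $N$), yield the minor. This contradicts the hypothesis.
\item Hence every relevant separating noose $N_i$ satisfies $|V_G(N_i)|\le k-1$.
\end{itemize}

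\textbf{Split and recombine.} Cut $G$ along each $N_i$ as follows.
\begin{itemize}
\item The inner part $G_i$ is the subgraph inside $\Interior(N_i)$, plus edges joining consecutive vertices of $V_G(N_i)$ to form a new outer face of size at most $k-1$. I make it $2$-connected; minor-closedness holds since the added cycle can be realized via paths outside, after possibly contracting.
\item The shallow part $G_0$ is the outside, with each interior replaced by a single face bounded by $V_G(N_i)$.
\end{itemize}
Each $G_i$ is smaller, has no cylindrical grid minor (its added edges correspond to a minor of $G$), and has outer face of size at most $k-1$. By induction, $G_i$ has a decomposition of width $w$ with $V_G(N_i)$ in one bag. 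The shallow part has radius at most about $h-1$ from $V(F)$, and its special faces $\{F\}\cup\{N_i\}$ have size at most $k-1$. Applying \cref{cor:td_faces} with $d\le h-1$ and face size $k-1$ gives width $\max\{3(h-1)+(k-1)+5, 2(h-1)+2(k-1)+1\}=\max\{3h+k+1,2h+2k-3\}$, leaving slack for the $O(1)$ losses in the noose bookkeeping. Finally, glue the decomposition of each $G_i$ to the bag of $G_0$ containing $V_G(N_i)$ by an edge between the two bags holding that clique. This yields a tree-decomposition of $G$ of width $w$, with $V(F)$ in a bag.

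\textbf{Main obstacle.} The main difficulty is the first step: defining the nooses precisely so that three things hold simultaneously. First, a noose with at least $k$ vertices at the right depth genuinely forces a \cgrid{k}{h} minor. Second, $G_0$ is $2$-connected, with distances measured so that $d\le h-1$ (or $h$ with constant slack). Third, the parts $G_i$ remain $2$-connected and minor-free. I would try first to take, for each component $C$ of the vertices at distance at least $h$ from $V(F)$, the minimal noose separating $C$ from $F$, and then use a Menger argument in the radial graph. Here the $h$ nested cycles come from the faces at radial distances $1,3,\dots$, and the $k$ disjoint paths come from minimality of the noose. The constant slack ($+4$ versus $+1$) is what absorbs the off-by-one issues between radial distance and graph distance.
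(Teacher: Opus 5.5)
Your overall architecture matches the paper's. You induct on $|V(G)|$, cut out the deep parts along nooses with at most $k-1$ vertices, handle the shallow part by \cref{cor:td_faces} with the noose faces as special faces, recurse inside (where the outer face has at most $k-1$ vertices), and glue along bags containing $V_G(N)$. The genuine gap is in how the nooses are obtained.

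Your first bullet is false as stated. A noose at a fixed radial depth with at least $k$ vertices does not give you $k$ vertex-disjoint paths crossing the layers, so your construction cannot produce a $k\times h$ cylindrical grid. The bottleneck may sit at a different depth, e.g.\ a $2$-vertex cut near $F$ behind which the layer at depth $h-1$ is arbitrarily long. The correct dichotomy is \cref{lem:cylinder_minor} (Lemma~4.2 of \cite{gu2012improved}, resting on \cref{lem:planar_Menger}). If the depth-$h$ contour $C$ and the depth-$1$ contour $C_1$ around it are at radial distance at least $2(h-1)$, then either the minor exists or \emph{some} noose with at most $k-1$ vertices separates them, located wherever the minimum cut is.

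The same point breaks your minor argument for $G_i$. Routing the added cycle through $G$ outside the disk needs $|V_G(N_i)|$ disjoint paths from $V_G(N_i)$ out to $C_1$. These exist only because $N_i$ is a minimum separating noose, which is exactly how \cref{lem:tree_decomposition_G_N} is proved. For a layer noose they need not exist, and adding the cycle can then create new minors.

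Your last paragraph does switch to minimum separating nooses, which is the right idea, but two facts you use without proof then need real arguments.
\begin{enumerate}
\item Minimum nooses for different deep contours may cross, so ``cut along each $N_i$'' is not well defined. The paper takes $(k,C)$-optimal nooses and uses the uncrossing argument of Lemma~4.4 of \cite{gu2012improved} to get pairwise disjoint interiors (\cref{lem:Separating_nooses}).
\item The distance bound for the shallow part is not automatic. A minimum noose can lie at any depth, and deleting the disks $\interior(N)$ may destroy shortest paths to $F$. Moreover, radial distance $2h+1$ only becomes graph distance $h$ after adding edges inside faces, and this must not destroy the special faces $F$ and $F_N$. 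The paper requires $\interior(N)$ to be inclusion-wise minimal. It then proves that a shortest radial path from a vertex of $G'$ to $F$ meets a noose disk only in consecutive noose vertices (\cref{clm:do_not_enter_the_noose,cl:shortest_path_noose}, by rerouting the noose along that path and contradicting optimality). Finally it builds the supergraph $H$ of \cref{cl:pseudo-triangulation}, in which every vertex is within distance $h$ of $V(F)$.
\end{enumerate}

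There is also no slack to absorb ``$O(1)$ losses''. \cref{cor:td_faces} is applied with $d=h$ and face size $k-1$, which gives exactly $\max\{3h+k+4,2h+2k-1\}$. Your claim $d\le h-1$ is not established by anything in the sketch.
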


\cref{thm:main} follows easily from \cref{thm:main induction}, as we now explain: 
Suppose that $G$ is a planar graph with no $t\times t$ grid minor. 
We may assume $t\geq 3$, since \cref{thm:main} is easily seen to hold for $t=1,2$. 
It is well-known (and easy to show) that the treewidth of a graph is the maximum of the treewidth of its blocks, thus it is enough to show that every block of $G$ has treewidth at most $4 t+4$.  
Each trivial block of $G$ (i.e.\ isomorphic to $K_1$ or $K_2$) has treewidth at most $1$. 
Each non-trivial block $B$ of $G$ has treewidth at most $4 t+4$, by \cref{thm:main induction} with $k=h=t$, since $B$ does not contain a $t\times t$ cylindrical grid as a minor. 
Therefore, \cref{thm:main} holds. 

Our proof of \Cref{thm:main induction} follows the same general approach 
as that of \cite{gu2012improved}, as described in Section 4 of their paper, 
except we use \cref{cor:td_faces} from the previous section to get a better tree-decomposition of a certain shallow part $G'$ of the given plane graph $G$, allowing us to get better bounds on the width of the resulting tree-decomposition of $G$. 
While the general ideas at the same, we nevertheless need to slightly adapt the framework of \cite{gu2012improved} to our setting. 
Thus, we describe the proof in full in this section.

\subsection{Family of well-behaved nooses}\label{ssec:Menger}

The goal of this section is to prove that if our plane graph $G$ contains no \cgrid{k}{h} as a minor, then there exists a family of small nooses separating the parts of $G$ that are \say{far away} (depending on $h$) from the outer face of $G$. 
Moreover, this family of nooses behaves well, in the sense that they bound open disks that are pairwise disjoint. 
This heavily relies on results from \citet{gu2012improved}.   
We introduce the necessary definitions and lemmas, and then explain how these lemmas follow, with minor adaptations, 
from lemmas in \cite{gu2012improved}. 

The following lemma gives a refinement of Menger's theorem for plane graphs; it follows from Lemma 4.1 in \cite{gu2012improved}. 

\begin{lemma}[Lemma 4.1 in \cite{gu2012improved}]
\label{lem:planar_Menger}
    Let $G$ be a plane graph, let $C, C'$ be two vertex-disjoint cycles of $G$, and let $k$ be a positive integer.  Then, either there exist $k$ vertex-disjoint $V(C)$--$V(C')$ paths in $G$, or there exists a noose $N$ separating $C$ from $C'$ in $G$ such that $|V_G(N)|\leq k-1$.
\end{lemma}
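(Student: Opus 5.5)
The plan is to combine the classical Menger theorem with a topological argument that turns a small vertex separator into a noose. First suppose there are no $k$ vertex-disjoint $V(C)$--$V(C')$ paths in $G$. By Menger's theorem there is then a set $X\subseteq V(G)$ with $|X|\leq k-1$ that separates $V(C)$ from $V(C')$. Note that $X$ may contain vertices of $C$ or of $C'$. Let $A$ be the set of vertices of $G-X$ that can be reached from $V(C)\setminus X$ in $G-X$. Then $V(C)\subseteq A\cup X$, and no vertex of $C'$ lies in $A$.

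Next I would build a closed connected region $U$ of the plane containing $C$. It consists of the vertices of $A$, all edges with an end in $A$, and the closures of all faces of $G$ incident to a vertex of $A$. If $A$ is empty, then $V(C)\subseteq X$, and I would take $U$ to be the cycle $C$ itself. The key property is that every face incident to a vertex of $A$ has its whole boundary in $A\cup X$. Indeed, the boundary of such a face is connected in $G$, so any boundary vertex outside $A\cup X$ would be reachable from $A$ in $G-X$. For the same reason, every edge leaving $A$ ends in $X$. Hence $\partial U$ meets $G$ only in vertices of $X$, and it passes through the interiors of faces. Since $C'$ is connected and disjoint from $A$, the set $C'\setminus X$ lies in $\mathbb{R}^2\setminus U$. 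It may consist of several pieces if $C'$ meets $X$, but every vertex of $C'$ in $X$ lies on $\partial U$ or outside $U$.

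Now I would pick the component $\Omega$ of $\mathbb{R}^2\setminus U$ that contains the outer face, or more carefully, the side needed. Because $C'$ is a cycle, either $C'$ winds around $U$ or lies in one complementary component, and I would choose $\Omega$ accordingly. The boundary of $\Omega$ is traced by a closed walk that alternates between vertices of $X$ and faces of $G$ incident to both $A$ and the complement. This walk is exactly a closed walk in the radial graph $\mathcal{R}_G$ whose $G$-vertices all lie in $X$. From this closed walk I would extract a cycle $N$ of $\mathcal{R}_G$ that still separates $U\supseteq C$ from $C'$. The tool here is the standard fact that a closed curve made of finitely many arcs, which separates two connected sets, contains a simple subcycle that separates them. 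One applies it to the connected sets $C$ and $C'$, where $C'$ may touch the curve at points of $X$. Then $V_G(N)\subseteq X$, so $|V_G(N)|\leq k-1$.

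The main obstacle is this last topological step: making sure the boundary of $\Omega$ really is a closed walk in $\mathcal{R}_G$ and that a simple cycle can be extracted which separates in the sense required. That sense is that $C$ lies in the closed disk $\Interior(N)$ and $C'$ lies outside the open disk $\interior(N)$, up to swapping. There is a further subtlety here: a face may be crossed by the boundary more than once, and this is handled by shortcutting within the cycle. There are also degenerate cases to treat, such as $X$ meeting both cycles, or $A=\emptyset$. If this becomes too delicate, I would instead invoke Lemma~4.1 of \cite{gu2012improved} directly and only check that their noose, a closed curve meeting each face at most once, translates into a cycle of $\mathcal{R}_G$ under our definition.
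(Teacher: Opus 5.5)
The paper gives no argument of its own for this lemma; it simply imports it from Lemma~4.1 of Gu and Tamaki, which is essentially your fallback. Your self-contained route, however, fails at its central step.

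The ``key property'' that every face incident to a vertex of $A$ has its whole boundary in $A\cup X$ is false. A facial boundary is connected in $G$ but not in $G-X$. Moreover, the faces a separating noose has to cross are typically exactly those whose boundary meets $A$, $X$ and the far side. For instance, draw two triangles $C=a_1a_2a_3$ and $C'=b_1b_2b_3$ side by side, joined by the paths $a_1x_1b_1$ and $a_2x_2b_2$, and let $X=\{x_1,x_2\}$, so $A=V(C)$. Both non-triangular faces contain vertices of $A$ and of $C'$. So your region $U$ is the whole plane minus the open disk bounded by $C'$: it contains $C'$, and $\partial U=C'$ passes neither through $X$ nor through any face interior. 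Even when the property does hold, taking closures of whole faces makes $\partial U$ run along edges of $G$ between vertices of $X$, never through face interiors.

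What is needed is only the part of each face near $A$. For $2$-connected $G$, each edge $uv$ with incident faces $f\neq f'$ spans a quadrilateral face $u\,f\,v\,f'$ of $\mathcal{R}_G$. The union of the closed quadrilaterals of the edges having an end in $A$ (together with those of the edges of $C$) is bounded by radial edges whose $G$-ends lie in $X$. Even then, you must extract from this boundary a cycle of $\mathcal{R}_G$ that separates $C$ from $C'$ in the required sense. You explicitly leave this step open, but it is the actual content of the lemma and still has to be done.

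The degenerate cases are not cosmetic either. Nooses here are cycles of the simple bipartite graph $\mathcal{R}_G$, so $|V_G(N)|\geq 2$ always. When $|X|\leq 1$, no noose can have $V_G(N)\subseteq X$. In fact the statement, read with the paper's definition of nooses, fails for graphs that are not $2$-connected. For $k=2$, two triangles joined by a single path $c\,x\,c'$ have no two disjoint $V(C)$--$V(C')$ paths, yet no noose has at most one vertex. For $k=1$, two triangles in different components fail in the same way.

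So a correct proof must use $2$-connectivity of $G$, or else work with curve-based nooses. $2$-connectivity holds wherever the paper applies the lemma, and it guarantees $|X|\geq 2$. The same caveat affects your fallback. A curve-based noose may pass through a single vertex, or through a cut vertex at a corner of a face other than the one where the radial edge is drawn. Hence the translation into a cycle of $\mathcal{R}_G$ is only routine in the $2$-connected case.
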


The next lemma follows from Lemma 4.2 in \cite{gu2012improved}.  

\begin{lem}[Corollary of Lemma 4.2 in \cite{gu2012improved}]\label{lem:cylinder_minor}
    Let $G$ be a plane graph and let $k,h$ be two integers with $k\geq 3$ and $h\geq 2$. 
    Let $C$ and $C'$ be two cycles of $G$ such that $d_{\mathcal{R}_G}(V(C), V(C'))\geq 2(h-1)$. Then,  either $G$ has a \cgrid{k}{h} minor, or there is a noose $N$ in $G$ separating $C$ from $C'$ such that $|V_G(N)|\leq k-1$.  
\end{lem}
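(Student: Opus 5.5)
The plan is to derive the statement from \cref{lem:planar_Menger}, turning disjoint paths into a cylindrical grid via the hypothesis on the radial distance between $C$ and $C'$. We apply \cref{lem:planar_Menger} with the integer $k$ to the vertex-disjoint cycles $C$ and $C'$; these are disjoint because $d_{\mathcal{R}_G}(V(C),V(C'))\geq 2(h-1)\geq 2$. If the lemma yields a noose $N$ separating $C$ from $C'$ with $|V_G(N)|\leq k-1$, we are done. So assume there are $k$ vertex-disjoint $V(C)$--$V(C')$ paths $P_1,\dots,P_k$, which we may take to meet $C$ and $C'$ only at their endpoints. The goal in this case is to build a \cgrid{k}{h} minor.

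In the planar setting the paths $P_1,\dots,P_k$ appear in the same cyclic order around $C$ and around $C'$, and they cut the annulus between $C$ and $C'$ into $k$ regions. The grid needs $h$ disjoint ``concentric'' cycles crossing all the $P_i$. The first and last are $C$ and $C'$ (contracted onto the paths). The remaining $h-2$ are obtained from the radial distance hypothesis. For $j=1,\dots,h-2$ consider the set of faces and vertices at radial distance exactly $2j$ from $V(C)$. Any curve from $C$ to $C'$, in particular any $P_i$, must cross each such level, because the radial distance is at least $2(h-1)$. Inside the annulus, the boundary between ``radial distance $<2j$'' and ``radial distance $\geq 2j$'' from $C$ yields a closed walk (after taking the component separating $C$ from $C'$) in $G$ separating $C$ from $C'$. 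Choosing one such walk per level and extracting a cycle from each gives cycles $C_1,\dots,C_{h-2}$ that are pairwise vertex-disjoint and disjoint from $C$ and $C'$, since their vertices lie at distinct radial distances: levels $2j$ and $2j'$ with $j\neq j'$ differ by at least $2$.

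Contracting the segments of each $P_i$ between consecutive cycles, and the arcs of each cycle between consecutive paths, then gives the \cgrid{k}{h} model.

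The main obstacle is making the level cycles rigorous. Specifically, one must show that the set of vertices at a given radial level contains a cycle of $G$ (not merely a noose-like curve) that separates $C$ from $C'$, and that consecutive path-crossings along it can be chosen compatibly with the cyclic order of the $P_i$. A cycle meeting a path $P_i$ several times is harmless, since we may keep any one crossing vertex and contract. This is exactly the content of Lemma 4.2 of \cite{gu2012improved}, so in practice I would invoke that lemma for the minor-extraction step and only verify that our radial-graph definition of nooses matches their curve-based one. For that check, a noose of size at most $k-1$ in their sense yields a cycle of $\mathcal{R}_G$ with the same vertices of $G$, after shortcutting repeated faces.
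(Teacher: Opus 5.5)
Your plan ends up where the paper does. The paper gives no argument beyond deducing the statement from Lemma~4.2 of Gu and Tamaki~\cite{gu2012improved}, and your proposal also rests on that lemma for the grid, with \cref{lem:planar_Menger} supplying the paths and a routine translation of their curve-nooses into cycles of $\mathcal{R}_G$. However, your self-contained sketch is not a proof, and you have the difficulty the wrong way round. The level cycles are the easy part. The claim that repeated meetings are ``harmless'' is wrong as stated. If some $P_i$ meets a level cycle several times, or a level cycle meets the paths out of cyclic order (for instance $P_2,P_3,P_2,P_1,\dots$ when it crosses $P_2$ back and forth), then the arcs and path segments you contract share vertices across different branch sets. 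No contraction can repair that, so some rerouting argument is needed, which is exactly what the cited lemma provides.
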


Recall that, in a $2$-connected plane graph, every face is bounded by a cycle of the graph. 
In particular, if $B$ is a non-trivial block of a plane graph $G$, then the outer face of $B$ bounds a cycle $C$ of $B$, and we simply say that $B$ is bounded by the cycle $C$. 
Given a plane graph $G$, a face $F$ of $G$, and a nonnegative integer $d$, 
every cycle $C$ of $G$ that bounds a non-trivial block $B$ of $G[\{v\in V(G) \mid d_{\mathcal R_G}(v, F)\geq 2d+1\}]$ 
is called a \defin{contour at depth $d$ from $F$}; the block $B$ is said to be the \defin{block corresponding to $C$}.   

Towards the construction of a \say{well-behaved} family of nooses,  
we need the following lemmas about faces, cycles and contours.

\begin{lem} \label{lem:not_in_a_block}
    Let $G$ be a $2$-connected plane graph.  Let $F$ be its outer face.  Let $d$ be a positive integer, and let $G_d$ be the subgraph of $G$ induced by all the vertices of $G$ at distance at least $2d+1$ from the vertex $F$ in $\mathcal R_G$.  Then, every vertex on the outer face of $G_d$ is at distance exactly $2d+1$ from $F$ in $\mathcal R_G$. 
\end{lem}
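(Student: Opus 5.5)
The plan is to exploit the parity structure of distances in the radial graph $\mathcal R_G$. This graph is bipartite, with vertices of $G$ on one side and faces on the other. Since $F$ is a face, every vertex $v$ of $G$ is at odd distance from $F$. Distances from $F$ to vertices therefore take values $1,3,5,\dots$, and $V(G_d)$ consists of exactly the vertices at distance $\geq 2d+1$. Let $v$ be a vertex on the outer face of $G_d$ and write $d_{\mathcal R_G}(v,F)=2j+1$ with $j\geq d$. The goal is to show $j=d$. Suppose for contradiction that $j\geq d+1$, so $d_{\mathcal R_G}(v,F)\geq 2d+3$.

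The key observation is local. If $d_{\mathcal R_G}(v,F)\geq 2d+3$, then every face $F''$ of $G$ incident to $v$ is at distance $\geq 2d+2$ from $F$. Hence every vertex $u$ on the boundary of such a face is at distance $\geq 2d+1$, so $u\in V(G_d)$. Thus all faces of $G$ around $v$ have their entire boundary cycles (here we use $2$-connectivity of $G$) contained in $G_d$. Moreover, all edges of these boundary cycles lie in $G_d$, since $G_d$ is induced.

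From this, I will argue that $v$ lies in the interior of $G_d$ in the plane, that is, not on its outer face. The union of the closed faces of $G$ incident to $v$ forms a closed neighbourhood of the point $v$ in the plane. I expect this to be the main, mildly topological step: checking that this union really is a neighbourhood of $v$. Its justification is that a small disk around $v$ is covered by the edges at $v$ and the face sectors between consecutive edges. The interior of this union avoids every edge and vertex of $G$ except those on the boundaries of the faces around $v$, and all of those belong to $G_d$. So each face of $G$ around $v$ is contained in a single face of $G_d$, since its boundary lies in $G_d$ and its interior contains no point of $G_d$. Any face of $G_d$ whose boundary contains $v$ must then meet a small disk around $v$, and therefore contains one of these faces of $G$, which are bounded faces. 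The alternative would be that the outer face $F$ of $G$ itself is incident to $v$, but that would give distance $1$, a contradiction. It remains to rule out that some bounded face of $G$ around $v$ is contained in the outer face of $G_d$. To do this I use that $F$, and the vertices of $G$ on $F$ (at distance $1<2d+1$), lie outside $G_d$, in its outer face. One then argues via a curve from a face at $v$ to $F$.

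A cleaner route for this step is as follows. Suppose the outer face $O$ of $G_d$ contains some face $F''$ of $G$ incident to $v$. Then $O$ is a connected open set containing both $F''$ and $F$. Take a curve inside $O$ from $F''$ to $F$. It passes alternately through faces and vertices of $G$ not in $G_d$, and crosses no edges of $G_d$. Perturb the curve so that it crosses edges of $G$ only at vertices, which is possible because edges not in $G_d$ have an endpoint outside $G_d$. This yields a walk in $\mathcal R_G$ from $F''$ to $F$ whose intermediate vertex-points all lie outside $G_d$, i.e.\ at distance $\leq 2d-1$ from $F$. The first vertex $u$ of $G$ on this walk is incident to $F''$, and hence lies in $G_d$ by the observation above, a contradiction. (If the curve meets no vertex at all, then $F''=F$, which is also impossible.) This contradiction shows $d_{\mathcal R_G}(v,F)=2d+1$.
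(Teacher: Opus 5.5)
Your proof is correct and is essentially the paper's argument in contrapositive form. The paper notes that a vertex $v$ on the outer face of $G_d$ lies on some face of $G$ together with a vertex $v_d\notin V(G_d)$, which gives $d_{\mathcal R_G}(F,v)\le (2d-1)+2$; your proposal proves exactly this step, which the paper only asserts, by showing that if every face at $v$ has its whole boundary in $G_d$ then $v$ is not on the outer face of $G_d$. Your curve-perturbation argument could be shortened: a bounded face of $G$ whose entire boundary lies in $G_d$ is itself a bounded face of $G_d$.
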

\begin{proof}
    First, remark that, since every vertex of $F$ is at distance exactly $1<2d+1$ from $F$ in $\mathcal R_G$, the face $F$ does not share any vertex with the outer face of $G_d$. 
    Let $v$ be a vertex on the outer face of $G_d$.  Since this face is vertex-disjoint from the face $F$ and $G$ is $2$-connected, the vertex $v$ is contained in a face $F_d$ of $G$ that contains at least one vertex $v_d\not \in V(G_d)$.
    
    Since the vertex $v_d$ is not in $V(G_d)$, we have $d_{\mathcal{R}_G}(F, v_d)\leq 2d+1 - 2$ and the following inequalities hold:
    
    \[
    d_{\mathcal{R}_G}(F, v)\leq d_{\mathcal{R}_G}(F, v_d)+d_{\mathcal{R}_G}(v_d, F_d)+d_{\mathcal{R}_G}(F_d, v)\leq 2d-1+1+1=2d+1.
    \qedhere 
    \]
\end{proof}


Let $G$ be a plane graph.  Let $F$ be the outer face of $G$, let $H$ be a $2$-connected subgraph of $G$ that does not share any vertex with $F$, and let $C_1$ be the contour at depth $1$ from $F$ in $G$, such that $H$ is drawn inside the closed disk $\Interior(C_1)$ bounded by $C_1$.  Given a positive integer $k$,
we say that a noose $N$ of $G$ is \defin{$(k, H)$-optimal} in $G$ if $N$ satisfies the following properties:
\begin{enumerate}
    \item $N$ (seen as a closed curve) separates $C_1$ and $H$ in $G$, \label{item:separates}
    \item $|V_G(N)|\leq k-1$ and $|V_G(N)|$ is minimal subject to the first  condition, and
    \item $\interior(N)$ is inclusion-wise minimal subject to the first two conditions. 
\end{enumerate}

Observe that, in particular, every $(k, H)$-optimal noose $N$ separates $H$ and $G[F]$ in $G$, and is such that $V_G(N)\cap V(G[F])=\emptyset$. 
We remark that, given our definition of nooses as cycles of the radial graph, $G$ has only finitely many nooses, and thus the minimality in the last point above is w.r.t.\ a finite set of nooses. 
For readers familiar with the proof in \cite{gu2012improved}, we note that a similar definition appears in \cite{gu2012improved} except that the nooses are required only to separate $H$ from $G[F]$ instead of $C_1$, which is less restrictive.
We could have used exactly the same setup as in \cite{gu2012improved}---and in fact, this would improve slightly the constant term in our bound in \cref{thm:main induction}---but we found that separating $H$ from a contour at depth $1$ instead of the outer face helped simplify the exposition of our proof. 

The following lemma follows essentially from the proof of Lemma 4.4 in \cite{gu2012improved}. 

\begin{lem}[Consequence of proof of Lemma 4.4 in \cite{gu2012improved}]
    \label{lem:Separating_nooses}
    Let $G$ be a plane graph. Let $k, h$ be integers with $k\geq 3$ and $h\geq 2$. 
    Let $F$ be the outer face of $G$ and let $\mathcal{C}$ be the set of contours of $G$ at depth $h$ from $F$.  
    If $G$ does not contain the \cgrid{k}{h} as a minor, then there exists a collection $\mathcal{N}$ of nooses of $G$ with the following properties:
    \begin{enumerate}
        \item For every noose $N\in \mathcal{N}$, there exists $C\in\mathcal{C}$ such that $N$ is $(k, C)$-optimal; \label{prop:N}
        \item For every contour $C\in \mathcal{C}$, there exists $N\in \mathcal{N}$ such that $C$ is drawn inside $\Interior(N)$; \label{prop:C}
        \item For every two distinct nooses $N,N'\in \mathcal{N}$, we have $\interior(N)\cap \interior(N')=\emptyset$. \label{prop:N_Nprime}
    \end{enumerate}
\end{lem}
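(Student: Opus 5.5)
We build $\mathcal{N}$ in three stages: apply \cref{lem:cylinder_minor} to each contour in $\mathcal{C}$, pick an optimal noose for each, and then show that optimal nooses can be taken with pairwise disjoint interiors.

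\textbf{Stage 1: existence of small separating nooses.} Let $C_1$ be the contour at depth $1$ from $F$ whose disk contains a given $C\in\mathcal{C}$. Every vertex of $C$ is at distance at least $2h+1$ from $F$ in $\mathcal{R}_G$. Every vertex of $C_1$ is at distance exactly $3$ from $F$, by \cref{lem:not_in_a_block}. Hence $d_{\mathcal{R}_G}(V(C),V(C_1))\geq 2h-2 = 2(h-1)$. Since $G$ has no \cgrid{k}{h} minor, \cref{lem:cylinder_minor} yields a noose separating $C_1$ from $C$ with at most $k-1$ vertices. The set of such nooses is finite and nonempty, so a $(k,C)$-optimal noose $N_C$ exists. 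It separates in the right direction: $C$ lies in $\Interior(N_C)$ and $C_1$ lies outside $\interior(N_C)$, because any noose with $C_1$ inside would have to avoid $C$ in its disk. We then take, among all the nooses $N_C$, those whose open disks are inclusion-wise maximal, and call this collection $\mathcal{N}$. Properties \ref{prop:N} and \ref{prop:C} then hold automatically, provided each $C$ lies in the disk of a maximal one. This follows by finiteness once we show that nested or crossing disks can be replaced appropriately.

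\textbf{Stage 2: uncrossing.} This is the main obstacle. Let $N=N_C$ and $N'=N_{C'}$ be two chosen optimal nooses with $\interior(N)\cap\interior(N')\neq\emptyset$, neither disk containing the other. We use the standard submodularity argument, as in the proof of Lemma 4.4 of \cite{gu2012improved}. From the union of the two curves we extract a noose $N_\cup$ bounding the outer boundary of $\interior(N)\cup\interior(N')$ and a noose $N_\cap$ bounding a component of $\interior(N)\cap\interior(N')$. We then check that $|V_G(N_\cup)|+|V_G(N_\cap)|\leq |V_G(N)|+|V_G(N')|$. Working in the radial graph, the vertices shared by $N$ and $N'$ are counted on both sides, and each face is traversed at most once, which keeps the new curves genuine nooses. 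If $C$ lies in $\interior(N)\cap\interior(N')$, then $N_\cap$ separates $C$ from $C_1$. Minimality of $|V_G(N)|$ gives $|V_G(N_\cap)|\ge|V_G(N)|$, so $|V_G(N_\cup)|\le |V_G(N')|$, and $N_\cup$ also separates $C'$ from $C_1$. Since $N_\cup$ and $C_1$ avoid each other, this contradicts nothing, so we replace $N'$ by $N_\cup$. Otherwise $C$ lies outside $\interior(N')$, and the region $\interior(N)\setminus\interior(N')$ gives a noose separating $C$ with no more vertices and a strictly smaller disk. This contradicts the inclusion-wise minimality of $\interior(N)$ in property (iii), and symmetrically for $N'$.

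\textbf{Stage 3: nesting.} A disk strictly containing another is handled by keeping the larger one. Condition \ref{prop:N} still holds for it, with its own contour, and condition \ref{prop:C} holds for the contour of the discarded noose. Iterating these replacements terminates, since there are finitely many nooses and the total disk content is monotone, and the outcome is a family satisfying \ref{prop:N_Nprime}. The delicate part throughout is verifying the vertex-count inequality when the curves share vertices or touch in faces. We plan to handle it by arguing entirely with cycles in $\mathcal{R}_G$ and adapting the corresponding case analysis of \cite{gu2012improved}, with $C_1$ in place of the outer face.
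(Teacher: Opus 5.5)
Your Stage~1, and your choice of $\mathcal{N}$ as the nooses $N_C$ with inclusion-wise maximal disks, coincide with the paper's proof. The paper then takes the uncrossing step from the proof of Lemma~4.4 in \cite{gu2012improved}: two optimal nooses whose open disks meet (and which, being maximal and distinct, must cross) cannot both be optimal. Your Stage~2 does not establish this, and in one case it asserts the opposite.

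When $C\subseteq\Interior(N)\cap\Interior(N')$, you stop at $|V_G(N_\cup)|\le|V_G(N')|$, say this ``contradicts nothing'', and replace $N'$ by $N_\cup$. That replacement destroys property (i). Since $N_\cup$ separates $C'$ from $C_1$, minimality of $|V_G(N')|$ forces $|V_G(N_\cup)|=|V_G(N')|$. Then $N_\cup$, whose open disk strictly contains $\interior(N')$, violates the third condition of $(k,C')$-optimality, and nothing makes it optimal for any other contour. So the family produced by your replacement and iteration need not satisfy (i).

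The missing step is to push the inequalities to equality. From $|V_G(N_\cup)|\ge|V_G(N')|$ and $|V_G(N_\cup)|+|V_G(N_\cap)|\le|V_G(N)|+|V_G(N')|$ you get $|V_G(N_\cap)|\le|V_G(N)|$, hence equality. Moreover $\interior(N_\cap)\subseteq\interior(N)\cap\interior(N')\subsetneq\interior(N)$, strictly because the disks cross. So $N_\cap$ contradicts the $(k,C)$-optimality of $N$. This case is therefore impossible, no replacement is ever needed, and your Stage~3 iteration and its ``monotone disk content'' termination argument are unnecessary: taking maximal disks already gives (iii).

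Your other case is also not justified as written. From $C\not\subseteq\Interior(N)\cap\Interior(N')$ you cannot conclude that $C$ avoids $\interior(N')$; nothing you say excludes that $N'$ passes through vertices of $C$ with parts of $C$ on both sides. Nor does the union/intersection inequality give that $\interior(N)\setminus\interior(N')$ yields a noose with at most $|V_G(N)|$ vertices. For that you need two further facts:
\begin{itemize}
\item the companion inequality for the difference regions, namely that the boundary nooses of $\interior(N)\setminus\Interior(N')$ and of $\interior(N')\setminus\Interior(N)$ have at most $|V_G(N)|+|V_G(N')|$ vertices in total;
\item the fact that $C'$ lies in $\Interior(N')\setminus\interior(N)$, so that the second of these nooses separates $C'$ from $C_1$ and has at least $|V_G(N')|$ vertices.
\end{itemize}
If instead $C'\subseteq\Interior(N)$, you are in the mirror image of the first case, which, argued correctly, contradicts the optimality of $N'$. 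So the case analysis must be organised around the positions of both $C$ and $C'$, not just $C$.
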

\begin{proof}
    First observe that if $\mathcal{C}=\emptyset$, then $\mathcal{N}=\emptyset$ satisfies all the required properties. Hence, suppose that $\mathcal{C}\neq \emptyset$.

    Let $C\in\mathcal{C}$ and let $C_1$ be the contour at depth $1$ from $F$ such that $C$ is contained in $\Interior(C_1)$.     
    As $G$ does not contain the \cgrid{k}{h} as a minor, and since $d_{\mathcal{R}_G}(V(C), V(C_1))\geq 2(h-1)$, 
    \cref{lem:cylinder_minor} ensures the existence of a noose $N$ with $|V_G(N)|\leq k-1$ that separates $C$ and $C_1$ in $G$.  
    Since $N$ satisfies property \ref{item:separates} of the definition of $(k, C)$-optimal, and since $|V_G(N)|\leq k-1$, we deduce that there exists a noose in $G$ that is $(k, C)$-optimal; 
    we let $N_C$ denote such a noose. 

    Let $\mathcal{N}$ be a maximal subset of nooses in $\{N_C \mid C\in\mathcal{C}\}$ bounding disks that are inclusion-wise maximal and pairwise distinct.     
    We claim that $\mathcal{N}$ satisfies the statement of the lemma. 
    Property~\ref{prop:N} holds by definition of $\mathcal{N}$. 
    To see that property~\ref{prop:C} holds, let $C\in \mathcal{C}$. 
    Then, seeing $N_C$ as a closed curve, we have that $C$ is drawn inside $\Interior(N_C)$. 
    If $N_C \in \mathcal{N}$, we are done. 
    Otherwise, by maximality of $\mathcal{N}$, there is $N\in \mathcal{N}$ such that $\Interior(N_C) \subset \Interior(N)$, and $N$ is then the desired noose.

    Finally, to show property~\ref{prop:N_Nprime}, let $N,N'\in \mathcal{N}$ be distinct. 
    Let $C, C'\in \mathcal{C}$ be such that $N$ is $(k, C)$-optimal and $N'$ is $(k, C')$-optimal.   
    We use Lemma 4.4 from \cite{gu2012improved} on $N$ and $N'$: 
    The proof of that lemma in \cite{gu2012improved} shows that 
    if $\interior(N)\cap \interior(N')\neq \emptyset$ 
    then either $N$ is not $(k, C)$-optimal or $N'$ is not $(k, C')$-optimal (using our terminology), 
    by producing a better noose using an uncrossing argument. 
    The newly created noose, seen as a cycle of the radial graph, is contained in $N \cup N'$; 
    in particular, it is still entirely drawn in the disk $\Interior(C_1)$, which is necessary for it to separate $C_1$ from $C$ or $C'$.          
\end{proof}

\subsection{Proof of Main Technical Theorem}
\label{ssec:proof main induction}

We proceed with the proof of \cref{thm:main induction}, which we restate here for convenience.  

\MainTechnicalTheorem* 

The proof is by induction on the size of $V(G)$.

Let $F$ be the outer face of $G$ and let $\mathcal{C}$ be the set of contours of $G$ at depth $h$ from $F$. Let $\mathcal{N}$ be the family of nooses obtained by applying \cref{lem:Separating_nooses}. Thus, every noose $N\in \mathcal{N}$ is $(k,C)$-optimal for at least one contour $C\in \mathcal{C}$, every contour $C\in \mathcal{C}$ is contained in the closed disk bounded by some noose $N\in \mathcal{N}$, and the disks bounded by nooses in $\mathcal{N}$ have pairwise disjoint interiors. 
(Note that possibly $\mathcal{C}=\emptyset$, in which case $\mathcal{N}=\emptyset$.) 
Let us point out that $|V_G(N)|\geq 2$ holds for every $N\in \mathcal{N}$, since $G$ is $2$-connected. 

For every noose $N\in\mathcal{N}$, choose a closed curve $\gamma_N$ in $\R^2$ that fulfills the following conditions:
\begin{enumerate}
    \item $\gamma_N$ intersects $G$ only in vertices of $G$; 
    \item $\Interior(\gamma_N) \subseteq \Interior(N)$;     
    \item $\gamma_N \cap N = V_G(N)$; that is, $\gamma_N$ intersects the curve $N$ precisely at the vertices in $V_G(N)$ and nowhere else, and      
    \item every vertex and every edge of $G$ contained in $\Interior(N)$ is also contained in $\Interior(\gamma_N)$.
\end{enumerate}
Note that such a curve $\gamma_N$ can easily be obtained from $N$ by \say{shrinking} slightly each section of $N$ between two consecutive vertices inside the disk bounded by $N$. 
Observe also that, for every two distinct nooses $N, N'\in \mathcal{N}$, the closed disks $\Interior(\gamma_{N}), \Interior(\gamma_{N'})$ only meet in vertices of $G$ and are otherwise disjoint (which was the reason for defining these closed curves). 

Our plan is, for every $N\in\mathcal{N}$,  to apply induction 
on the subgraph of $G$ captured by the closed disk $\Interior(\gamma_{N})$.  
However, we cannot simply take these subgraphs as such, because they are not guaranteed to be $2$-connected, and their outer faces might contain extra vertices not on $\gamma_{N}$ (which could cause these outer faces to have sizes bigger than $k$). 
To fix this, we will add extra edges in a natural way. 
This will possibly create some parallel edges, which is why we (temporarily) consider multigraphs in what follows. 

Next, for every noose $N\in\mathcal{N}$, we modify the graph $G$ as follows, resulting in a multigraph: 
Consider the vertices of $G$ on $\gamma_N$ in, say, clockwise order. 
For every two consecutive vertices $v, w$ in this cyclic order, 
\begin{itemize}
    \item if there is already an edge $vw$ drawn inside $\Interior(\gamma_N)$, remove it, 
    \item then add the edge $vw$ and draw it along the segment from $v$ to $w$ in $\gamma_N$ clockwise.
\end{itemize}
We let $C_N$ denote the cycle defined by the newly added edges, which is thus drawn along $\gamma_N$.  
(Note that $C_N$ has at least two edges, since $|V_G(N)|\geq 2$.) 
We let $\widetilde{G}$ denote the plane multigraph resulting from the above operation on every noose $N\in\mathcal{N}$. 
(The fact that this results in a plane drawing follows from the fact that the closed disks bounded by the curves $\gamma_{N}$ for $N\in \mathcal{N}$ only meet in vertices in $G$ and are otherwise disjoint.) 
Note that $\widetilde{G}$ may contain parallel edges, for three reasons: 
(1) the cycle $C_N$ consists of two parallel edges in case $|V_G(N)|=2$; 
(2) an edge $vw$ drawn outside $\Interior(\gamma_N)$ could exist in $G$ when a new edge $vw$ is added along a segment of $\gamma_N$, and   
(3) the same edge $vw$ can be added multiple times in the drawing if $v, w$ appear consecutively on multiple nooses in $\mathcal{N}$. 
Observe also that, on the other hand, for every noose $N\in\mathcal{N}$, the subgraph of $\widetilde{G}$ contained in $\Interior(\gamma_N)$ has parallel edges only if $|V_G(N)|=2$, and in this case the only parallel edges are the two edges in $C_N$; 
this will be important for applying induction later on since parallel edges are not allowed in the statement of \cref{thm:main induction}. 
Finally, observe that every tree-decomposition of $\widetilde{G}$ is also a tree-decomposition of $G$, and $F$ is also the 
outer face of $\widetilde{G}$. Thus, in what follows, we may focus on finding a tree-decomposition of $\widetilde{G}$ that has the desired properties.

Let $G'$ be the plane multigraph obtained from $\widetilde{G}$ by making every open disk $\interior(N)$ ($N\in \mathcal{N}$) a face $F_N$ bounded by the cycle $C_N$; that is, every vertex in $\interior(N)$ is removed, and every edge intersecting $\interior(N)$ is removed as well.  

Next, for every $N\in \mathcal{N}$, let $G_N$ be the plane multigraph that is the subgraph of $\widetilde{G}$ that is drawn inside $\Interior(N)$. 
As pointed out before, $G_N$ has no parallel edges, except if $|V_G(N)|=2$, and in this case the only parallel edges are two edges in $C_N$. Also, the outer face of $G_N$ is bounded by the cycle $C_N$, which has size 
 $|C_N|=|V_G(N)|\leq k-1$.
\begin{figure}[!htb]
\centering
\begin{subfigure}{\textwidth}
  \centering    
    \includegraphics[width=0.6\linewidth]{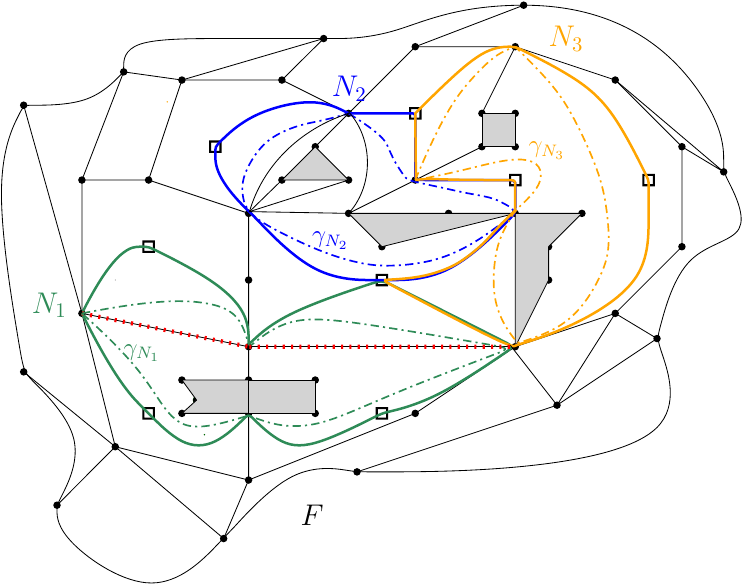}   
  \caption{
  For each $i\in \{1, 2, 3\}$, the curve $\gamma_{N_i}$ is the dash dotted closed curve that has the same color as the noose $N_i$.  The gray areas represent the interiors of the contours at depth $h$ that are inside the nooses $N_1, N_2$ and $N_3$.  Full circle vertices are vertices of $G$, while square vertices are vertices of $\mathcal R_G$ corresponding to faces of $G$.  The hatched edge is an edge that will be removed while creating $G_{N_1}$.   }
  \label{subfig:modification_G}
\end{subfigure}%

\vspace{2em}

\begin{subfigure}{.47\textwidth}
  \centering    
    \includegraphics[width=0.9\linewidth]{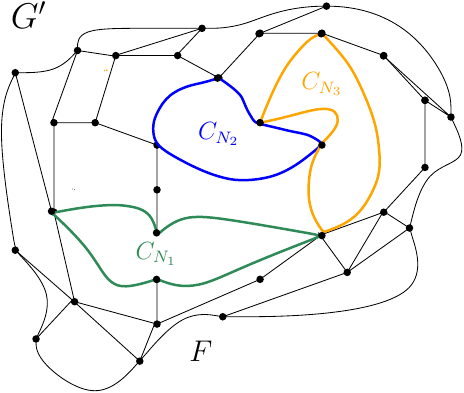}
  \caption{Illustration of the graph $G'$ obtained from the modifications of $G$ in \cref{subfig:modification_G}.  For each $i\in \{1, 2, 3\}$, the curve $\gamma_{N_i}$ has been replaced by a cycle $C_{N_i}$ of the same color, and every vertex an edge that lie inside $\interior(N_i)$ has been removed.
  } 
  \label{fig:subfig: G'}
\end{subfigure}
\hfill{}
\begin{subfigure}{.47\textwidth}
  \centering
  \includegraphics[width=0.9\linewidth]{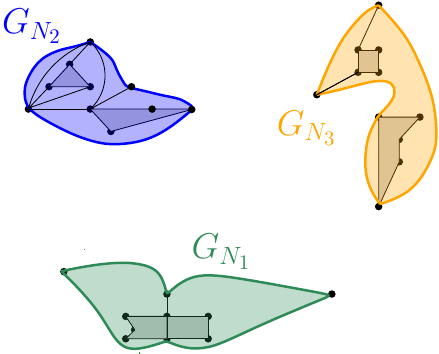}
 \caption{Illustration of the graphs $G_{N_1}, G_{N_2}$ and $G_{N_3}$.  For each $i\in \{1, 2, 3\}$, the boundary of the outer face of $G_{N_i}$ is a cycle drawn on $\gamma_{N_i}$.  Observe that the hatched edges in \Cref{subfig:modification_G} are indeed not present in $G_{N_1}$.}
  \label{subfig : G_{N_i}}
\end{subfigure}
\caption{Illustration of the modification of $G$ in 
    \cref{thm:main induction}.  }
\label{fig:examplesC1P}
\end{figure}

\begin{lem}\label{lem:vertices_of_G'_have_small_eccentricity}
    Let $v\in V(G')$.  In $\mathcal R_G$, the distance between $v$ and $F$ is at most $2h+1$.
\end{lem}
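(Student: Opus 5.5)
Argue by contradiction. Suppose $v\in V(G')$ satisfies $d_{\mathcal R_G}(v,F)\geq 2h+2$. Since $v$ is a vertex of $G$ and $F$ is a face, every path from $v$ to $F$ in the bipartite graph $\mathcal R_G$ has odd length, so in fact $d_{\mathcal R_G}(v,F)\geq 2h+3$. The plan is to show that $v$ lies in some block of $G_h$ whose contour is swallowed by a noose of $\mathcal N$. Then $v$ would be removed when passing to $G'$, unless it is on that noose, and in that case the noose itself gives a short path to $F$.

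First, let $G_h$ be the subgraph of $G$ induced by the vertices at $\mathcal R_G$-distance at least $2h+1$ from $F$. We claim $v$ lies inside (in the closed disk of) some contour $C\in\mathcal C$ at depth $h$. Every neighbour of $v$ in $G$ shares a face with $v$, so it is at $\mathcal R_G$-distance at least $2h+1$ from $F$ and hence lies in $G_h$. Moreover, every face of $G$ incident to $v$ has all its vertices in $G_h$, since each such vertex is at distance at least $d_{\mathcal R_G}(v,F)-2\geq 2h+1$. So $v$ is an interior point of the union of closed faces all of whose vertices lie in $G_h$. Next, by \cref{lem:not_in_a_block}, the vertices on the outer face of $G_h$ are at distance exactly $2h+1$, whereas $v$ is at distance at least $2h+3$; hence $v$ is not on the outer face of $G_h$. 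Take the component of $G_h$ containing $v$. Its outer boundary decomposes into outer cycles of non-trivial blocks together with trivial parts. Since $v$ is strictly enclosed, it lies in $\Interior(C)$ for the outer cycle $C$ of some non-trivial block of $G_h$, and that $C$ is a contour at depth $h$, i.e. $C\in\mathcal C$. This topological step, extracting an enclosing non-trivial block, is the part I expect to require the most care. The cleanest route is probably to note that the faces around $v$ form a disk whose boundary cycle lies in $G_h$. This boundary lies in a single block of $G_h$ and separates $v$ from $F$, so the outer cycle of that block encloses $v$.

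By property \ref{prop:C} of \cref{lem:Separating_nooses}, there is $N\in\mathcal N$ with $C\subseteq\Interior(N)$, and therefore $v\in\Interior(N)$. Since all vertices in $\interior(N)$ are deleted when forming $G'$ and $v\in V(G')$, we must have $v\in V_G(N)$. By property \ref{prop:N} and the definition of $(k,C')$-optimality, $N$ separates $C_1$ from some $C'\in\mathcal C$ and has at most $k-1$ vertices. We now use minimality to get a contradiction. We have just seen that every face around $v$ has all its vertices deep, so $v$ and its two faces on $N$ lie strictly inside the deep region. One option is to show that $N$ could be rerouted to avoid $v$, contradicting the minimality of $|V_G(N)|$ or of $\interior(N)$. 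A simpler option is to argue that $v\in V_G(N)$ forces a neighbouring face of $N$ to contain a vertex outside $\Interior(C)$ closer to $F$. If that fails, I would fall back on the fact that the consecutive vertices of $N$ at $v$ share faces with $v$ and are hence at distance at least $2h+1$. Propagating this around $N$ would then show that $N$ lies strictly inside the deep region, and so cannot separate $C'$ from $C_1$ with $C'$ enclosed. Either way we reach a contradiction, so $d_{\mathcal R_G}(v,F)\leq 2h+1$.
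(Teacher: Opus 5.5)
Your argument is fine up to the point where you obtain a contour $C\in\mathcal C$ enclosing $v$ and a noose $N\in\mathcal N$ with $C\subseteq\Interior(N)$; this follows the same route as the paper. The gap is in the last paragraph. You only record $v\in\Interior(C)$, and hence $v\in\Interior(N)$, so you are left with the case $v\in V_G(N)$. For that case you offer three alternatives (rerouting $N$, finding a shallower vertex on a neighbouring face, propagating depth around $N$), and none of them is carried out.

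The propagation option fails as stated. Consecutive vertices of $N$ are at distance $2$ in $\mathcal R_G$, so the depth can drop by $2$ at each step. Nothing forces the rest of $N$, which may have up to $k-1$ vertices, to stay in the deep region. The rerouting option is also unjustified: detouring around $v$ through its incident faces typically adds vertices to the noose, so no contradiction with the minimality of $|V_G(N)|$ is apparent.

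The missing observation is that the case $v\in V_G(N)$ cannot occur, because $v$ lies in the \emph{open} disk of $C$. The paper argues this as follows. The vertices of $C$ lie on the outer face of $G_h$, so by \cref{lem:not_in_a_block} they are at distance exactly $2h+1$ from $F$. But $v$ is at distance at least $2h+3$, so $v\notin V(C)$.

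Your own ``cleanest route'' also gives this directly. It produces a cycle $D$ of $G_h$ with $v\in\interior(D)$, and $D\subseteq\Interior(C)$ gives $\interior(D)\subseteq\interior(C)$.

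Either way $v\in\interior(C)$. Since $\Interior(C)\subseteq\Interior(N)$, taking topological interiors yields $\interior(C)\subseteq\interior(N)$. So $v\in\interior(N)$ and $v$ is deleted when $G'$ is formed, contradicting $v\in V(G')$. This is exactly how the paper concludes. No optimality property of $N$ is needed, only property \ref{prop:C} of \cref{lem:Separating_nooses}.
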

\begin{proof}
    Suppose for a contradiction that there exists $v\in V(G')$ such $v$ is at distance at least $2h+2$ from $F$ in $\mathcal R_G$.  Thus, $v\in V(G_h)$.  If $v$ is on the outer face of $G_h$, then, by \cref{lem:not_in_a_block}, $v$ is at distance exactly $2h+1$ from $F$ in $\mathcal R_G$, a contradiction.  Therefore, there exists a nontrivial block $B$ of $G_h$ such that $v\in V(G_h)$.  Let $C$ be the contour at depth $h$ which is the cycle delimiting the outer face of $B$.  Since $v$ is not on the outer face of $G_h$, $v$ lies in $\interior(C)$.  However, by definition of $\mathcal N$, there is a noose $N\in \mathcal N$ that separates $C$ from $F$ in $G$.  Therefore, $v\in \interior(N)$, and, by construction of $G'$, $v\not\in V(G')$, a contradiction. 
\end{proof}

\begin{lem}\label{lem: tree_dec_shallow_graph}
    There exists a tree-decomposition $\mathcal{T}'$ of $G'$ of width at most $\max\{3h+k+4, 2h+ 2k-1\}$ such that, 
    for every $N\in \mathcal{N}$, there exists a bag containing $V(F_N)=V_G(N)$. 
    Moreover, if the outer face $F$ is such that $|V(F)|\leq k-1$, then there is also a bag containing $V(F)$. 
\end{lem}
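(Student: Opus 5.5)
The plan is to apply \cref{cor:td_faces} to $G'$ with the outer face $F$ playing the role of $R$. The set of faces to accommodate is $\mathcal{F}' \coloneqq \{F_N \mid N\in\mathcal{N}\}\cup\{F\}$. The work is to check the hypotheses and to see how $k$ and $d$ enter, so that the width bound $\max\{3d+k'+5,2d+2k'+1\}$ becomes $\max\{3h+k+4,2h+2k-1\}$. Setting $d=h$ and $k'=k-1$ gives $\max\{3h+k+4, 2h+2k-1\}$ exactly, so these are the targets.

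\textbf{Hypotheses on $G'$.} First, $G'$ should be loopless: no loops are ever created, and $C_N$ has at least two edges. Second, $G'$ should be $2$-connected. I would argue this from the $2$-connectivity of $\widetilde G$ (hence of $G$ plus the added cycles $C_N$). Replacing the contents of each disk $\Interior(\gamma_N)$ by the single face bounded by the cycle $C_N$ preserves $2$-connectivity. The reason is that any path of $\widetilde G$ passing through $\interior(N)$ enters and leaves through vertices of $C_N$, and can be rerouted along either arc of $C_N$, which avoids any single removed vertex. Third, $G'$ should have at least three vertices, which is clear unless $G'$ is degenerate. Next, every face $F_N$ has size $|V_G(N)|\le k-1$ by the $(k,C)$-optimality in \cref{lem:Separating_nooses}.

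\textbf{Handling the outer face.} The outer face $F$ is in $\mathcal{F}'$ only in the ``moreover'' case, where $|V(F)|\le k-1$. Otherwise I would take $\mathcal{F}'=\{F_N\}\cup\{F\}$ anyway, but I must make sure $F$'s size does not matter. To do this I will triangulate $F$ first, adding a vertex inside it adjacent to all of $V(F)$ (or adding chords), and then take as $R$ a triangular face adjacent to it. I expect this to cost only $O(1)$ in $d$, so the constants need checking. With face size $k' = k-1$, the corollary is then applied with parameter $k-1$.

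\textbf{Distance bound (the main obstacle).} I need every vertex of $G'$ to lie within distance $d\le h$ of $V(F)$ in $G'$. \cref{lem:vertices_of_G'_have_small_eccentricity} gives distance at most $2h+1$ in $\mathcal R_G$, which translates to at most $h$ face-hops from $V(F)$. However, the corollary measures graph distance in $G'$, so I must convert radial distance into graph distance. I would triangulate every face of $G'$ not in $\mathcal{F}'$, as in the proof of \cref{cor:td_faces}, but using a star-like triangulation. After that, any two vertices sharing a face of $G$ outside the nooses are at graph distance about one. The faces $F_N$ are left untriangulated; a shortest radial path can pass through a noose face, but shortcuts along $C_N$ or the distance bound inside the shallow region should handle this. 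I would therefore aim to show $d_{H}(v,V(F))\le h$ in the triangulated graph $H$, with the last hop absorbed since vertices at radial distance $1$ from $F$ lie on $F$. Getting exactly $d=h$, rather than $h+1$, is the delicate accounting. If it fails, I would fall back to placing $c(F)$ as in \cref{thm:td_faces} and using the slack $k'=k-1$ to compensate.
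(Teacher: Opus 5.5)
Your reduction matches the paper's: apply \cref{cor:td_faces} with $R=F$, $d=h$ and face-size bound $k-1$ to a plane supergraph $H$ of $G'$ on the same vertex set in which $F$ and every $F_N$ remain faces. You also correctly locate the difficulty, namely showing that every vertex is within distance $h$ of $V(F)$ in $H$. But you do not resolve it, and this is a genuine gap.

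There is no slack: $d=h$ and $k'=k-1$ hit the target exactly. Your fallback (an extra vertex in $F$) makes $d=h+1$ and costs $3$ in the width. Lowering $k'$ cannot compensate, since the faces $F_N$ may have $k-1$ vertices. Two ingredients are missing.

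First, the star triangulations need the right centres. Take a BFS tree of $\mathcal R_G$ rooted at $F$. Inside the face $Z'$ of $G'$ containing the radial vertex of a face $Z$ of $G$ (when $Z'$ is not some $F_N$), join the BFS parent of $Z$ to the children of $Z$. Then every vertex--face--vertex step of a shortest radial path is a single edge of $H$. Radial distance at most $2h+1$ (\cref{lem:vertices_of_G'_have_small_eccentricity}) then gives graph distance at most $h$. With arbitrary centres, co-facial vertices are only guaranteed distance $2$, which roughly doubles the bound.

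Second, and this is the heart of the lemma, you must show that a shortest radial path from a vertex of $G'$ to $F$ never cuts through a disk $\Interior(N)$. Everything inside is deleted in $G'$, no chords may be added in $F_N$, and rerouting along $C_N$ between the entry and exit points can be strictly longer. The paper derives this from the $(k,C)$-optimality of $N$, which you use only to bound $|V_G(N)|$. Suppose a subpath $P'$ between $a,b\in N$ runs through $\interior(N)$. It is a shortest $a$--$b$ path, so it is no longer than either arc of $N$. Replacing the appropriate arc by $P'$ yields a noose that still separates $C$ from $C_1$, because the path avoids the interior of the block bounded by $C$, which lies at radial distance more than $2h+1$ from $F$. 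This new noose beats $N$ in the minimality conditions defining $(k,C)$-optimality, a contradiction (\cref{clm:do_not_enter_the_noose}). The same argument shows that two vertices at distance $2$ on the path lying in $\Interior(N)$ are at distance $2$ on $N$, hence adjacent through $C_N$ (\cref{cl:shortest_path_noose}).

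Your treatment of a large outer face has the same flaw as the fallback: an apex in $F$ makes $d=h+1$. The paper simply applies \cref{cor:td_faces} with $R=F$. When $|V(F)|\ge k$ this is not literally covered by the corollary's hypotheses, and the paper does not address it. A fix: in the proof of \cref{thm:td_faces}, the size of $R$ matters only through the sets $\gamma(x)$ and the splitting pair. These serve solely to put $V(R)$ into a bag, so they can be dropped when that is not required.
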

\begin{proof}
We first prove two useful results about shortests path in the radial graph of $G$. 
Afterwards we use them to construct a plane supergraph $H$ of $G'$ that has radius at most $h$, 
such that $F$ and every $F_N$ ($N\in \mathcal{N}$) remain faces of $H$. 
Then, the desired tree-decomposition will follow from \cref{cor:td_faces}.

\begin{claim}\label{clm:do_not_enter_the_noose}
    Let $v$ be a vertex of $V(G')$ and let $P$ be a shortest path in $\mathcal{R}_{G}$ between $v$ and $F$.  
    Suppose that $x, y$ are two consecutive vertices on $P$ and that, in $\mathcal{R}_{G}$, they are drawn inside the disk
    $\Interior(N)$ for some $N\in \mathcal{N}$. 
    Then $x,y$ are on the noose $N$ and they are consecutive on $N$. 
\end{claim} 

\begin{proofclaim}  
    By \cref{lem:vertices_of_G'_have_small_eccentricity}, the length of $P$ is at most $2h+1$.  Towards a contradiction, suppose $x, y$ do not satisfy the statement of the claim.  
    
    Let $P'$ be an inclusion-wise maximal subpath of $P$ such that $x, y\in V(P')$, the two endpoints of $P'$ belong to $N$, and every internal vertex of $P'$ is drawn in $\mathcal R_G$ in the open disk $\interior(N)$.  
    Note that $P'$ is well defined, since the two endpoints $F$ and $v$ of $P$ are drawn outside $\Interior(N)$ in $\mathcal R_G$. 
    Let $a, b$ be the endpoints of $P'$. 
    Thus $a, b$ are distinct and they are the only vertices of $P'$ on $N$.

    Let $P_1,P_2$ be the two $a$--$b$ paths contained in the cycle $N$ of $\mathcal{R}_G$ whose union is $N$. 
    For $i=1,2$, let $N_i:=P_i\cup P'$; observe that $N_i$ is a cycle in $\mathcal{R}_G$, or equivalently, a noose of $G$.  Since $P'$ is a subpath of $P$, it is a shortest path between $a$ and $b$ in $\mathcal R_G$.  In particular, $|V(P')|\leq |V(P_i)|$ for each $i\in \{1, 2\}$, and therefore the cycle $N_i$ of $\mathcal{R}_G$ has length at most that of $N$, for each $i\in \{1, 2\}$.  

    \begin{figure}[h!]
    \centering
    \includegraphics[width=0.65\linewidth]{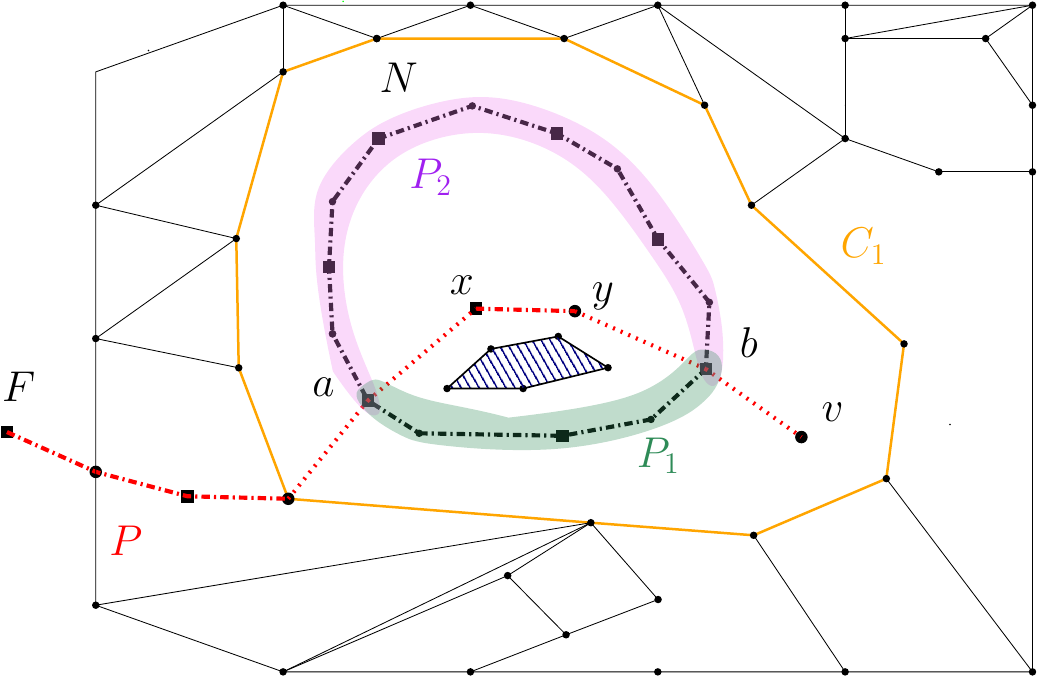}
    \caption{Schema of the hypothetical case inside the proof of \cref{clm:do_not_enter_the_noose}, when the path $P'$ is inside the noose $N$.  Full edges represent edges of the graph $G'$, dash dotted edges represent edges of $\mathcal R_G$, and dotted edges represent paths in $\mathcal R_G$ (potentially of length zero).  The block $B$ is hatched.}
    \label{fig:placeholder}
    \end{figure}

    Recall that $|V_G(N)|\leq k-1$ and that $N$ is $(k, C)$-optimal in $G$ for some $C\in \mathcal{C}$.  
    Thus, by \cref{lem:not_in_a_block}, every vertex on $C$ is at distance exactly $2h+1$ from $F$ in $\mathcal R_G$. 
    Hence, if $B$ is the block whose contour is $C$, then every vertex of $\mathcal{R}_G$ corresponding to an inner face of $B$ 
    is at distance at least $2h+2$ from $F$ in $\mathcal R_G$, and similarly every vertex of $\mathcal{R}_G$ corresponding to a vertex of $B-V(C)$ is at distance at least $2h+2$ from $F$ in $\mathcal R_G$. 
    In particular, none of these vertices of $\mathcal R_G$ is contained in the path $P$.  
    It follows that there exists $i\in \{1, 2\}$ such that $B$ is drawn inside the disk $\Interior(N_i)$ in $G$, and in particular $C$ is drawn inside $\Interior(N_i)$.  Without loss of generality, suppose $i=1$.  
    
    Let $C_1$ be the contour at depth $1$ in $G$ such that $C$ is drawn in $G$ inside the disk $\Interior(C_1)$. 
    Recall that the length of $N_1$ is at most that of $N$. 
    We will show that $N_1$ is a better noose than $N$, contradicting the fact that $N$ is $(k, C)$-optimal. 

    If $P'$ contains at least three vertices, then 
    $P'$ contains at least one vertex that is drawn in $\mathcal R_G$ in the open disk $\interior(N)$.  
    It follows that $P'$ is fully drawn inside $\interior(N)$, and hence that $N_1$ 
    is a noose separating $C$ from $C_1$ in $G$. 
    Moreover, $\interior(N_1)$ is strictly contained in $\interior(N)$, implying that $N$ is not $(k, C)$-optimal, a contradiction. 

    Now, assume that $P'$ consists only of the two vertices $a, b$.  
    Suppose, for contradiction, that $a$ and $b$ are not consecutive on $N$.  
    Then, $N_1$ has length strictly less than that of $N$, and in particular 
    $|V_G(N_1)|<|V_G(N)|$.  If the path $P'$, i.e.\ the edge $ab$ of $\mathcal R_G$, is fully drawn inside $\Interior(N)$, 
    then $N_1$ separates $C$ from $C_1$ in $G$, and we deduce that $N$ is not $(k, C)$-optimal, as in the previous paragraph. 
    We may thus assume that the edge $ab$ is drawn outside $\Interior(N)$ except for its two endpoints. 
    We claim that $N_1$ is nevertheless drawn inside $\Interior(C_1)$ in $\mathcal R_G$, implying that $N_1$ still separates $C$ from $C_1$ in $G$, and in turn that $N$ is not $(k, C)$-optimal, as desired. 
    This can be seen as follows. 
    First, note that the path $N_1-ab = P_1$ in $\mathcal R_G$ is drawn inside $\Interior(C_1)$. 
    Thus, if $N_1$ is not fully drawn inside $\Interior(C_1)$, it follows that the edge $ab$, as a curve, crosses the closed curve defined by $C_1$ at least twice. 
    However, since $C_1$ is a cycle of $G$ and $N_1$ is a cycle of $\mathcal R_G$, their only possible points of intersection, as closed curves in the plane, are vertices of $G$. 
    In particular, the edge $ab$ intersects the closed curve defined by $C_1$ at most once. 
    This completes the proof.   
\end{proofclaim}

\begin{claim}\label{cl:shortest_path_noose}
     Let $v$ be a vertex of $V(G')$ and let $P$ be a shortest path in $\mathcal{R}_{G}$ between $v$ and $F$.  
    Suppose that $x, y$ are two vertices of $V(G)\cap V(P)$ that are at distance $2$ on $P$, and that are drawn inside the disk 
    $\Interior(N)$ for some $N\in \mathcal{N}$. 
    Then $x,y$ are on $N$ and they are at distance $2$ on the cycle $N$.
\end{claim}
\begin{proofclaim}
    Let $z$ be the unique vertex of $P$ between $x$ and $y$.  First, assume that $z$ is drawn in $\Interior(N)$ in $\mathcal{R}_{G}$.  By \cref{clm:do_not_enter_the_noose} applied on $x,z$, and then on $z,y$, we deduce that $x, y$ and $z$ are vertices of $N$, that $x,z$ are consecutive on $N$, and that $z,y$ are consecutive on $N$.  In this case, the result directly follows.

    Now, assume that $z$ is not drawn in $\Interior(N)$.  Let $P'$ be the subpath of length $2$ of $P$ containing the vertices $x, z$ and $y$.  Let $P_1,P_2$ be the two $x$--$y$ paths contained in the cycle $N$ of $\mathcal{R}_G$ whose union is $N$. 
    For $i=1,2$, let $N_i:=P_i\cup P'$.  Since $z$ is not drawn in $\Interior (N)$, there exists $i\in \{1, 2\}$ such that $\Interior(N)\subset \Interior (N_i)$.  Without loss of generality, $i=1$.  Let $C\in \mathcal{C}$ be such that $N$ is $(k, C)$-optimal in $G$.  Since $\interior(N)\subset \interior(N_1)$, we have that $C$ is drawn entirely inside the disk $\Interior(N_1)$.  Let $C_1$ be the contour at depth $1$ from $F$ in $G$ such that $C$ is drawn inside $\Interior(C_1)$.  First, assume that $N_1$ is entirely drawn in the disk $\Interior(C_1)$.  Thus, $N_1$ separates $C$ and $C_1$ in $G$.  
    Since $N$ is $(k, C)$ optimal, it follows that $|V_G(N)|\leq |V_G(N_1)|$ and thus $|V(P_2)|\leq |V(P')|=3$.  Therefore, $|V(P_2)|=3$, thus $x$ and $y$ are at distance $2$ on the cycle $N$, and the result follows.

    Now, assume that $N_1$ is not entirely drawn inside $\Interior(C_1)$.  
    We will show that this leads to a contradiction. 
    Since $N$ is drawn inside $\Interior(C_1)$, this implies that $P'$, seen as a curve, is not entirely drawn inside $\Interior(C_1)$.  However, $C_1$ is a cycle of $G$ and $P'$ is a path of $\mathcal R_G$, and thus $P'$ can intersect $C_1$ only in points corresponding to vertices of $G$.  Since $P'$ is not entirely drawn inside $\Interior(C_1)$ but its endpoints $x, y$ are drawn in $\Interior(C_1)$, $P'$ and $C_1$ intersect at least twice as curves.  
    Therefore, they intersect exactly twice, and the two intersections are $x$ and $y$.  In particular, $x$ and $y$ belong to $C_1$, and thus, by \cref{lem:not_in_a_block}, they are at distance exactly $3$ from $F$ in $\mathcal R_G$.  This is a contradiction, since $x$ and $y$ are distinct vertices of a shortest path from $v$ to $F$ in $\mathcal R_G$, which implies that their distance to $F$ in $\mathcal R_G$ cannot be the same. 
  \end{proofclaim}

In the next claim, we build the supermultigraph $H$ of $G'$ mentioned before. 

\begin{claim}\label{cl:pseudo-triangulation}
There exists a plane multigraph $H$ such that 
\begin{enumerate}
        \item $V(G')=V(H)$; \label{prop:V}
        \item $E(G')\subseteq E(H)$; \label{prop:E}
        \item $F$ is a face of $H$; \label{prop:F}
        \item $F_N$ is a face of $H$ for every $N\in \mathcal{N}$, and \label{prop:F_N}
        \item every vertex $v\in V(H)$ is at distance at most $h$ from $V(H[F])$ in $H$. \label{prop:h}
    \end{enumerate}
\end{claim}

\begin{proofclaim}
    Let $T$ be a BFS tree of $\mathcal R_{G}$, rooted in $F$. Let us construct $H$ by adding edges to $G'$ as follows. 
    
    Consider every face $Z$ of $G$ distinct from $F$ such that $G[Z]$ contains at least two vertices of $G'$ and the parent of $Z$ in $T$, denoted by $z$, is in $V(G')$. 
    The face $Z$ corresponds to a vertex of $\mathcal{R}_G$, hence to a point of the plane; 
    let $Z'$ be the face of $G'$ containing this point. 
    As $z\in V(G')$, for each $N\in \mathcal{N}$ the only intersection of $\Interior (\gamma_N)$ and the edge $zZ\in E(\mathcal{R}_G)$ is $z$, by construction of $G'$. Hence, $zZ$ is a curve in the plane that is entirely contained in (the closure of) $Z'$, and we conclude that $z\in V(G'[Z'])$. 
    Now, if $Z'$ is distinct from all the faces $F_N$ with $N\in \mathcal{N}$, we add edges as follows (see \cref{fig:H} for an illustration): 
    For every vertex $x$ in $G'[Z']$ such that $x\neq z$ and $x$ is a child of $Z$ in $T$, add the edge $zx$, drawn inside the face $Z'$.  
    Note that all the newly added edges can be drawn in a planar way, since they are all incident to $z$. 

    \begin{figure}[h]
    \centering
    \includegraphics[width=0.3\linewidth]{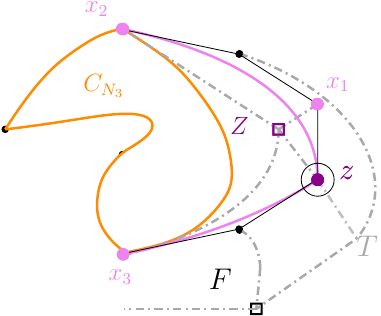}
    \caption{Additional edges drawn in the face $Z$ to create $H$.}
    \label{fig:H}
    \end{figure}
    
    Let $H$ be the multigraph resulting from the above edge additions to $G'$.     
    Let us show that $H$ fulfills the properties of the claim. 
    Properties \ref{prop:V}--\ref{prop:F_N} directly follow from its construction. 

    It remains to prove Property \ref{prop:h}. 
    Let $v\in V(G')$.  
    By \cref{lem:vertices_of_G'_have_small_eccentricity}, the distance between $v$ and $F$ in $\mathcal R_G$ is at most $2h+1$.  Let $P$ be the $v$--$F$ path in $T$, which is thus a shortest path in $\mathcal R_G$ from $v$ to $F$.  To show that Property \ref{prop:h} holds for $v$, it is enough to argue that every vertex in $V(P)\cap V(G)$ is also a vertex of $G'$, and that, for every two vertices $u, w\in V(P)\cap V(G')$ at distance $2$ on $P$, there exists an edge in $H$ having these two vertices as endpoints.  

    First, let us show that every vertex in $V(P)\cap V(G)$ is also in $V(G')$. Towards a contradiction, assume that there exists $u\in V(P)$, such that $u\in V(G)\setminus V(G')$. By construction of $G'$, the vertex $u$ is drawn inside $\interior(N)$ for some $N\in \mathcal N$. 
    Recalling that $F$ is drawn in $\mathcal R_G$ outside $\interior(N)$, we deduce that the $u$--$F$ subpath of $P$ has length at least $3$. 
    Let $Z$ denote the neighbor of $u$ on the latter subpath. 
    Then, $Z$ is drawn in $\mathcal R_G$ inside $\interior(N)$, and it follows from \cref{clm:do_not_enter_the_noose} that $u,Z$ are both on $N$, a contradiction.

    Now, let us show that, for every two vertices of $u,w\in V(G')$ at distance $2$ on $P$, the edge $uw$ exists in $H$. 
    Without loss of generality, assume that $u$ is closer to $F$ than $w$ in $P$, and let $Z$ be the face of $G$ corresponding to the vertex of $P$ between $u$ and $w$.
    Then $u$ is the parent of $Z$ in $T$ and $w$ is a child of $Z$ in $T$. 
    In particular, the face $Z$ was considered in the definition of $H$ above; let $Z'$ be the corresponding face in $G'$. 
    
    If $Z'=F_N$ for some $N\in \mathcal{N}$, then $u, w\in V_G(N)$. By \cref{cl:shortest_path_noose}, $u$ and $w$ are consecutive on $N$ and, by construction of $G'$, the edge $uw$ is in $G'$, and thus also in $H$. 
    If $Z'\neq F_N$ for every $N\in \mathcal{N}$, then the edge $uw$ was added inside the face $Z'$ when constructing $H$. 
    Therefore, $uw\in E(H)$ in both cases, as desired. 
    This concludes the proof. 
\end{proofclaim}

The graph $H$ is such that $F$ is at distance at most $h$ from any vertex, and the faces in $\mathcal F$ are faces of length at most $k-1$.  We can apply \Cref{cor:td_faces} to $H$ and $\mathcal{F}$, the set of faces of size at most $k-1$ to obtain a tree-decomposition $\mathcal{T}'$ of width at most $ \max\{3h+k+4, 2 h+ 2k-1\}$ such that for every face in $\mathcal{F}$, and $F$ if $|V(F)|\leq k-1$, there exists a bag containing its vertex set.  As $G'$ is a submultigraph of $H$ on the same vertex set, $\mathcal T'$ is also a tree-decomposition of~$G'$.
\end{proof}

The following lemma will allow us to apply induction on each graph $G_N$ ($N\in \mathcal{N}$). 
The last property in the lemma, about parallel edges of $G_N$, has already been justified when defining $G_N$; 
it is repeated in the lemma statement for emphasis. 

\begin{lem}\label{lem:tree_decomposition_G_N}    
    Let $N\in \mathcal{N}$. Then $G_N$ is a plane $2$-connected multigraph that does not contain the \cgrid{k}{h} as a minor and 
    satisfies
    \[
    |V(G_N)| < |V(G)| \quad \textrm{ and } \quad |V(C_N)|\leq k-1.   
    \]
    Moreover, if $G_N$ has parallel edges, then $|V(C_N)|=2$ and the only parallel edges are the two edges in $C_N$. 
\end{lem}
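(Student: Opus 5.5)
We check the four assertions separately: the size bounds and the parallel-edge property are almost immediate, no grid minor comes from a minor relation, and $2$-connectivity is the only real work.

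\emph{Size bounds.} The bound $|V(C_N)| = |V_G(N)| \le k-1$ is property (ii) of $(k,C)$-optimality. For $|V(G_N)|<|V(G)|$, recall that $V(G_N)\subseteq V(G)$ consists of the vertices drawn in $\Interior(N)$. Since $N$ separates $C_1$ from $C$, and $V_G(N)\cap V(G[F])=\emptyset$, the vertices of the outer face $F$ lie outside $\Interior(N)$. Hence they are missing from $G_N$. The parallel-edge statement was already justified when $G_N$ was defined: inside $\Interior(\gamma_N)$, every edge of $\widetilde G$ joining two consecutive vertices of $\gamma_N$ was replaced by the single edge of $C_N$. So parallel edges can only arise when $C_N$ is a digon.

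\emph{No cylindrical grid minor.} We show $G_N$ is a minor of $G$. All edges of $G_N$ other than those of $C_N$ are edges of $G$ drawn in $\Interior(N)$. Take an edge $vw$ of $C_N$, where $v,w$ are consecutive on $N$. Then the radial path $v$--$Z$--$w$ in $N$ passes through a face $Z$ of $G$ lying in $\Interior(N)$ (or having its relevant boundary segment there). On the boundary of $Z$ there is a $v$--$w$ path whose interior lies in $\interior(N)$, unless $vw$ is itself an edge of $G$ bordering $Z$. These paths are internally disjoint for distinct edges of $C_N$, because they use distinct faces, or distinct sides of a face, between consecutive noose vertices. Contracting each such path to an edge and deleting the vertices of $G$ outside $\Interior(N)$ yields $G_N$. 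The case $|V_G(N)|=2$ needs the two boundary arcs to give two distinct paths, which is fine because minors of multigraphs only matter up to simplification. Therefore a \cgrid{k}{h} minor of $G_N$ would give one in $G$.

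\emph{2-connectivity.} This is the main obstacle. The graph $G_N$ is the cycle $C_N$ together with everything of $G$ drawn in $\interior(\gamma_N)$, and the outer face of $G_N$ is bounded by $C_N$. I would argue by contradiction: suppose some vertex $x$ is a cut vertex of $G_N$. A component of $G_N - x$ avoiding $C_N$ is impossible, because $G$ is $2$-connected: such a component $K$ has all its $G$-neighbours outside $K$ among $V(C_N)\cup\{x\}$, and since $K$ avoids $C_N$ this means only $x$, making $x$ a cut vertex of $G$. So every component of $G_N - x$ meets $C_N$. Since $C_N-x$ is connected, this forces a single component unless there is a second component. That second component would have to be attached to $C_N$ only at $x$ and lie in $\interior(\gamma_N)$, which the previous argument rules out. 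One must also handle the degenerate case where $C_N$ is a digon, which is still fine.

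It remains to check that the cycle $C_N$ really bounds the outer face and that no vertex of $\interior(N)$ is disconnected from it. Here I would use the $2$-connectivity of $G$ and the fact that $\Interior(N)$ contains $C$. If needed, I would use minimality of $N$ (optimality) to exclude degenerate nooses, for example ones passing twice through a vertex, which cannot happen since $N$ is a cycle of $\mathcal R_G$.
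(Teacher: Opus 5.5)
\textbf{Gap: the minor argument.} Your size bounds, the parallel-edge remark and the $2$-connectivity argument are fine and essentially coincide with the paper's: a component of $G_N-x$ avoiding $C_N$ would be cut off by $x$ in $G$ too. You should also record $|V(G_N)|\ge 3$, which holds since $C\subseteq\Interior(N)$.

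The genuine gap is the proof that $G_N$ is a minor of $G$. Take an edge $vw$ of $C_N$ whose segment $v,Z,w$ of $N$ runs through the face $Z$. The $v$--$w$ arc of the boundary of $Z$ on the inner side lies in $\Interior(N)$. So its internal vertices are themselves vertices of $G_N$, and in $G_N$ both this arc and the new edge $vw$ are present. Contracting the arc merges distinct vertices of $G_N$, so you obtain a proper minor of $G_N$, not $G_N$, unless the arc is a single edge.

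Your disjointness claim is also unjustified. Boundaries of distinct faces can share vertices and even edges, and an inner arc may pass through other vertices of $V_G(N)$.

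More fundamentally, essentially all of $G$ inside $\Interior(N)$ is already used by $G_N$. So the edges of $C_N$ must be carried by the part of $G$ outside $\interior(N)$, and whether this is possible depends on $N$. For an arbitrary noose it is false:
\begin{itemize}
\item take a wheel with rim $v_1a_1v_2a_2v_3a_3v_4a_4$ and add a vertex $x$ adjacent to $v_1,\dots,v_4$;
\item let $N$ pass through $v_1,\dots,v_4$ and the four faces at $x$, with the wheel on the inside;
\item then $G_N$ has one vertex fewer than $G$ but the same number of edges, so it is not a minor of $G$.
\end{itemize}
Your argument never uses the $(k,C)$-optimality of $N$, so it would equally "prove" the false conclusion in this example. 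Some global input such as optimality is unavoidable.

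\textbf{The missing idea} is the one the paper uses.
\begin{itemize}
\item By $(k,C)$-optimality, every noose separating $C$ from the depth-$1$ contour $C_1$ has at least $\ell=|V(C_N)|$ vertices.
\item So \cref{lem:planar_Menger} yields $\ell$ vertex-disjoint $V(C)$--$V(C_1)$ paths, each meeting $V(C_N)$ in exactly one vertex $v_i$.
\item Their tails $P'_i$ from $v_i$ to $w_i\in V(C_1)$ lie outside $\interior(N)$, and the $w_i$ occur along $C_1$ in the same cyclic order as the $v_i$ along $C_N$.
\item Let $Q_i$ be the arc of $C_1$ from $w_i$ to $w_{i+1}$. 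Then the subgraph $(G_N-E(C_N))\cup\bigcup_i P'_i\cup\bigcup_i Q_i$ of $G$ contracts onto $G_N$.
\item Concretely, contract each $P'_i$ into $v_i$; each $Q_i$ then contracts to the edge $v_iv_{i+1}$ of $C_N$.
\end{itemize}
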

\begin{proof}
Let $C\in \mathcal{C}$ be such that $N$ is $(k,C)$-optimal. 
Let $v_1, v_2, \ldots , v_\ell$ be the vertices of $C_N$, in clockwise order around $C_N$. 
Thus $2\leq \ell \leq k-1$. 

Let $C_1$ be the contour at depth $1$ such that $C$ is drawn in the interior of $C_1$.  Since $N$ is $(k,C)$-optimal, every noose $N'$ of $G$ separating $C_1$ from $C$ in $G$ satisfies $|V_G(N')|\geq \ell$.   
By \cref{lem:planar_Menger}, there exists a collection of $\ell$ pairwise vertex-disjoint $V(C)$--$V(C_1)$ paths in $G$, and, since $N$ separates $C$ from $C_1$, each of these paths contains exactly one vertex of $V(C_N)$. 
Let $\mathcal{P}=\{P_1,\ldots,P_\ell\}$ denote such a collection. 
We may assume that $V(P_i)\cap V(C_N)=\{v_i\}$, for each $i\in [\ell]$. 
For each $i\in [\ell]$, let $w_i$ be the endpoint of $P_i$ in $V(C_1)$, and let $P'_i$ be the $v_i$--$w_i$ subpath of $P_i$. 
Then $\mathcal{P'}=\{P'_1,\ldots, P'_\ell\}$ is a collection of pairwise vertex-disjoint $V(C_N)$--$V(C_1)$ paths in $G$. 
By planarity, the vertices in $\{w_i\mid i \in [\ell]\}$ appear in clockwise order as $w_1, w_2, \dots, w_\ell$ along the cycle $C_1$. (To see this, consider the cylinder formed by the union of the two cycles $C_1$ and $C_N$ and the $\ell$ paths $P'_1,\ldots, P'_\ell$.) 
For each $i\in [\ell]$, let $Q_i$ be the path from $w_i$ to $w_{i+1}$ in $C_1$ in clockwise direction (where indices are taken cyclically).  

We use the paths defined above to show that $G_N$ is a minor of $G$:  
The only edges of $G_N$ that are possibly not in $G$ are edges of $C_N$. 
Now, let  
\[
J_N := (G_N - E(C_N)) 
\cup (P'_1 \cup \cdots \cup P'_\ell)
\cup (Q_1 \cup \cdots \cup Q_\ell). 
\]
In words, $J_N$ is the graph obtained from $G_N$ by removing all edges of $C_N$ and taking the union with all the paths $P'_1,\ldots, P'_\ell$ and $Q_1,\ldots, Q_\ell$. 
Then, $J_N$ is a subgraph of $G$ that contains $G_N$ as a minor. 
Hence, $G_N$ is a minor of $G$, as claimed. 

It follows that $G_N$ does not have a \cgrid{k}{h} minor, since $G$ does not. 

Let us show that $G_N$ is $2$-connected. 
This essentially follows from the fact that $G$ is $2$-connected and the existence of the cycle $C_N$ in $G_N$. 
First, we point out that $|V(G_N)|\geq 3$ since $N$ separates some contour $C\in \mathcal{C}$ from $F$ in $G$, and $|V(C)|\geq 3$. 
The fact that $G_N$ is connected follows easily from the fact that $G$ is connected. 
It remains to show that $G_N$ has no cut vertex. 
Suppose that $v$ is a cut vertex of $G_N$. 
In particular, $v$ separates in $G_N$ the cycle $C_N$ from some vertex $z$ of $G_N - (V(C_N) \cup \{v\})$. 
(Note that possibly $C_N$ consists of two parallel edges, which is not an issue for this argument.) 
Then, $v$ also separates $z$ from $V(C_N)$ in $G$, contradicting the fact that $G$ is $2$-connected. 
Thus, there is no such cut vertex, and therefore $G_N$ is $2$-connected. 

Finally, note that $|V(G_N)| \leq |V(G)| - |V(G[F])| < |V(G)|$, since $C_N$ is vertex-disjoint from $V(G[F])$.   
\end{proof}

Let $N\in\mathcal{N}$. 
Our goal now is to apply induction on $G_N$. 
If $|V_G(N)|\geq 3$, then \Cref{lem:tree_decomposition_G_N} ensures that $G_N$ satisfies the hypotheses of \Cref{thm:main induction},  and we can apply the induction on $G_N$ with its outer face $F_N$, resulting in a tree-decomposition $\mathcal T_N$ of $G_N$ of width at most $\max\{3h+k+4,2h+2k-1\}$ where $V_G(N)$ is contained in a bag.  
If $|V_G(N)|= 2$, then let $e,e'$ be the two parallel edges composing the cycle $C_N$. 
It follows then from  \Cref{lem:tree_decomposition_G_N} that $G_N-e$ satisfies the hypotheses of \Cref{thm:main induction}, and we can apply the induction on $G_N-e$ resulting in a tree-decomposition $\mathcal T_N$ of $G_N-e$ of width at most $\max\{3h+k+4,2h+2k-1\}$, with no specific guarantee on the vertices of the outer face of $G_N-e$ in this case. 
Note however that, by definition of a tree-decomposition, there is a bag of $\mathcal T_N$ containing both endpoints of $e'$; that is, this bag contains $V_G(N)$. 
Therefore, in both cases, $V_G(N)$ is contained in a bag of the tree-decomposition $\mathcal T_N$. 

To complete the proof of \cref{thm:main induction}, we show that there exists a tree-decomposition of $\Tilde{G}$ respecting the conditions of \cref{thm:main induction}.  Since $\Tilde G$ contains $G$ as a subgraph, the result will directly follow.  
The tree-decomposition of $\Tilde{G}$ is obtained by combining the tree-decomposition $\mathcal{T}'$ of $G'$ given by \cref{lem: tree_dec_shallow_graph} and  
the tree-decompositions $\mathcal{T}_N$ of $G_N$ (with possibly an edge removed if $|V_G(N)|= 2$) for each $N\in \mathcal N$ in the natural 
way: For each $N\in \mathcal N$, consider a bag of $\mathcal{T}_N$ containing $V_G(N)$ and a bag of $\mathcal{T}'$ containing $V_G(N)$, and add an edge between the corresponding nodes of their respective trees. 
It is easily verified that this results in a tree-decomposition of $\Tilde{G}$ with the desired properties. 
This concludes the proof of \cref{thm:main induction}.

\bibliographystyle{abbrvnat}
\bibliography{bibliography}

\end{document}